\documentclass[11pt]{article}
\usepackage{amsmath, amsthm, amssymb, latexsym}
\usepackage{graphicx}
\usepackage{amsmath}
\usepackage{amsfonts}
\usepackage{enumerate}
\usepackage{latexsym}
\usepackage{epsfig}
\usepackage{amsthm}  
\usepackage{amsmath}
\usepackage{amssymb} 
\usepackage{latexsym}
\usepackage{epsfig} 
\usepackage{amssymb,latexsym}
\usepackage[all]{xy}

\usepackage{a4,fullpage,amssymb,epsf,psfrag,times}

%
%
%
\newcommand     {\comment}[1]   {}
\newcommand{\mute}[2] {}
\newcommand     {\printname}[1] {}


\newtheorem{theorem}{Theorem}[section]
\newtheorem{proposition}[theorem]{Proposition}

\newtheorem{lemma}[theorem]{Lemma}
\newtheorem{claim}[theorem]{Claim}

\newtheorem{corollary}[theorem]{Corollary}

\newcommand{\id}{\mathop{{\rm Id}}\nolimits}

\newcommand{\pt}{\mathop{{\rm pt}}\nolimits}
\newcommand{\PSL}{\mathop{{\rm PSL}}\nolimits}
\newcommand{\SO}{\mathop{{\rm SO}}\nolimits}
\newcommand{\Aut}{\mathop{{\rm Aut}}\nolimits}

\newcommand{\reg}{\mathop{{\rm reg}}\nolimits}
\newcommand{\R}{{\mathbb R}}
\newcommand{\E}{{\mathcal E}}

\newcommand{\fB}{{\mathcal B}}
\newcommand{\mS}{{\mathcal S}}
\newcommand{\J}{{\mathcal J}}
\newcommand{\M}{{\mathcal M}}
\newcommand{\W}{{\mathcal W}}
\newcommand{\C}{{\mathbb C}}
\newcommand{\CP}{{\mathbb C}{\mathbb P}}
\newcommand{\Z}{{\mathbb Z}}

\newcommand{\op}[1]{\!\!\mathop{\rm ~#1}\nolimits}

\newcommand{\scriptop}[1]{\!\!\mathop{\mbox{\rm \scriptsize ~#1}}\nolimits}

\newenvironment{remark}{\refstepcounter{theorem}\par\medskip\noindent{\bf
Remark~\thetheorem~~}}{\unskip\nobreak\hfill\hbox{ $\oslash$}\par\bigskip}

\newenvironment{noTitle}{\refstepcounter{theorem}\par\medskip\noindent{\thetheorem~~}}{\unskip\nobreak\hfill\hbox{ $\oslash$}\par\bigskip}

\newenvironment{example}{\refstepcounter{theorem}\par\medskip\noindent{\bf
Example~\thetheorem~~}}{\unskip\nobreak\hfill\hbox{ $\oslash$}\par\bigskip}

\newenvironment{definition}{\refstepcounter{theorem}\par\medskip\noindent{\bf
Definition~\thetheorem~~}}{\unskip\nobreak\hfill\hbox{}\par\bigskip}

\begin{document}

\title{Symplectic geometry on moduli spaces of $J$\--holomorphic curves}
\author{J. Coffey, L. Kessler, and \'A. Pelayo} 

\date{}

\maketitle

\begin{abstract}
Let $(M,\omega)$ be a symplectic manifold, and $\Sigma$ a compact Riemann surface. We define a $2$-form $\omega_{\mathcal{S}_{\op{i}}(\Sigma)}$ on the space $\mathcal{S}_{\op{i}}(\Sigma)$ of immersed symplectic surfaces in $M$, and show that the form is closed and 
non-degenerate, up to reparametrizations. Then
we give conditions on a compatible almost complex structure $J$ on $(M,\, \omega)$ that ensure that the restriction of  $\omega_{\mathcal{S}_{\op{i}}(\Sigma)}$ to the moduli space of simple immersed $J$-holomorphic $\Sigma$-curves in a homology class  $A \in \op{H}_2(M,\,\Z)$ is a symplectic form, and show applications and examples.
In particular, we deduce sufficient conditions for the existence of $J$-holomorphic $\Sigma$-curves 
in a given homology class for a generic $J$. 
\end{abstract}

\section{Introduction}

In this paper we define and study geometric structures induced on the moduli space of $J$-holomorphic curves in a symplectic manifold with a compatible almost complex structure $J$.
These constructions yield symplectic invariants of the original manifold.

Let $(M,\omega)$ be a finite-dimensional symplectic manifold, and $\Sigma$ a closed $2$-manifold. Denote by
$$\op{ev} \colon \op{C}^{\infty}(\Sigma,\, M)  \times \Sigma \to M$$ 
the \emph{evaluation map} 
$$
\op{ev}(f,\,x):= f(x).
$$

\begin{definition}
We define a $2$-form on $\op{C}^{\infty}(\Sigma,\, M)$ as the push-forward of the $4$-form ${\op{ev}}^{\ast}(\omega \wedge \omega)$ along the coordinate-projection  
$\pi_{\op{C}^{\infty}(\Sigma,\, M)} \colon \op{C}^{\infty}(\Sigma,\, M) \times \Sigma \to \op{C}^{\infty}(\Sigma,\, M)$:
  \begin{eqnarray} \label{omegas1:def}
       (\omega_{\op{C}^{\infty}(\Sigma,\, M)})_f( \tau_1, \,\tau_2):=\int_{\{f\} \times \Sigma} \iota_{( \ell_1 \wedge
    \ell_2 )} {\op{ev}}^{\ast} ( \omega \wedge \omega ).
  \end{eqnarray}
Here $\ell_{i} \in \op{T}(\op{C}^{\infty}(\Sigma,\, M) \times \Sigma)$ is a
\emph{lifting} of $\tau_{i} \in \op{T}_f(\op{C}^{\infty}(\Sigma,\, M))$,  i.e.,
$$
\op{d} ( \pi_{\op{C}^{\infty}(\Sigma,\, M)} ) {\ell_{i}}_{( f,\, x )}  = \tau_{i} \,\,\, \textup{at each point}\,\,\,
(f, \,x) \in \pi_{\op{C}^{\infty}(\Sigma,\, M)}^{- 1} (f).
$$
\end{definition}

Denote 
\begin{eqnarray} 
\mathcal{S}_{\op{i}}(\Sigma):=\{ f \colon \Sigma \to M \ \mid \ f \textup{ is an
immersion, }f^{*}\omega \textup{ is a symplectic form on }\Sigma \}. \nonumber
\end{eqnarray}
The space $\mathcal{S}_{\op{i}}(\Sigma)$ is  an open subset of the 
Fr\'echet manifold $\op{C}^{\infty}(\Sigma,\, M)$.
We identify the tangent space 
with the space $\Omega^0(\Sigma,\, f^*(\op{T}\!M))$ 
of  smooth vector fields $\tau \colon \Sigma \rightarrow f^{\ast} (\op{T}\!M)$. We 
say that a vector
field  $\tau \colon \Sigma \rightarrow f^{\ast} (\op{T}\!M)$ {\em{is tangent to $f(\Sigma)$ at $x$}} if $\tau(x) \in
\op{d}\!f_{x}(\op{T}_{x}\Sigma)$. We say that $\tau$ is \emph{everywhere tangent to $f(\Sigma)$}
if $\tau$ is tangent to $f(\Sigma)$ at $x$ for
 every $x \in \Sigma$. Let $$\omega_{\mathcal{S}_{\op{i}}(\Sigma)}$$ be the $2$\--form on  $\mathcal{S}_{\op{i}}(\Sigma)$ given
by the restriction of $\omega_{\op{C}^{\infty}(\Sigma,\, M)}$.

\begin{theorem} \label{thclonon}
 The $2$-form $\omega_{\op{C}^{\infty}(\Sigma,\, M)}$ on $\op{C}^{\infty}(\Sigma,\, M)$ is well defined and closed, and $\omega_{\op{C}^{\infty}(\Sigma,\, M)} (\tau, \,\cdot )$
vanishes at $f$ if $\tau$ is  everywhere tangent to $f(\Sigma)$.

 The $2$\--form $\omega_{\mathcal{S}_{\op{i}}(\Sigma)}$ on $\mathcal{S}_{\op{i}}(\Sigma)$ is  closed, and $\omega_{\mathcal{S}_{\op{i}}(\Sigma)} (\tau, \,\cdot )$
vanishes at $f$ if and only if $\tau$ is everywhere tangent to $f(\Sigma)$ .  
\end{theorem}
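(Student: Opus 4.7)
The plan is to treat the assertions in order, noting that well-definedness and the ``if'' halves of the vanishing statements both follow from a common dimensional observation, closedness reduces to a fiber-integration identity, and only the ``only if'' direction on $\mathcal{S}_{\op{i}}(\Sigma)$ genuinely uses the symplectic condition on $f$.

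For well-definedness and the ``if'' statements, I would work at a fixed point $(f, x)$. Any two liftings $\ell_i, \ell_i'$ of $\tau_i$ differ by a vertical vector in $\op{T}_x \Sigma$. Restricting the contracted $2$-form to $\{f\} \times \Sigma$ amounts to plugging in $(0, X), (0, Y) \in \op{T}_x \Sigma$, so $\op{ev}^{*}(\omega \wedge \omega)$ ends up evaluated at $f(x)$ on four tangent vectors, at least three of which are of the form $\op{d} f_x(\cdot) \in \op{d} f_x(\op{T}_x\Sigma)$. Since this image has dimension at most $2$, three such vectors are linearly dependent and the alternating $4$-form $\omega \wedge \omega$ vanishes; this yields well-definedness. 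The same count, applied with the canonical lift $\ell = (\tau, 0)$ under the hypothesis $\tau(x) \in \op{d} f_x(\op{T}_x\Sigma)$, proves that $\omega_{\op{C}^\infty(\Sigma, M)}(\tau, \cdot)$ annihilates everywhere-tangent $\tau$.

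For closedness I would identify \eqref{omegas1:def} as the fiber integral $\omega_{\op{C}^\infty(\Sigma, M)} = (\pi_{\op{C}^\infty(\Sigma, M)})_{*}\op{ev}^{*}(\omega \wedge \omega)$ and invoke the standard identity $\op{d}\pi_{*} = \pi_{*}\op{d}$ for integration over a closed oriented fiber, whose boundary term vanishes because $\partial \Sigma = \emptyset$. Since $\op{d}(\omega \wedge \omega) = 2\,\omega \wedge \op{d}\omega = 0$, closedness follows, and it passes automatically to the open subset $\mathcal{S}_{\op{i}}(\Sigma)$.

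The main obstacle is the ``only if'' direction on $\mathcal{S}_{\op{i}}(\Sigma)$. I would begin by computing the integrand explicitly from the canonical lifts $(\tau, 0), (\tau', 0)$: expanding $\omega \wedge \omega$ yields on $\op{T}_x \Sigma$ the pointwise $2$-form
\[
2\bigl[\,\omega(\tau, \tau')\, f^{*}\omega - \omega(\tau, \op{d} f(\cdot)) \wedge \omega(\tau', \op{d} f(\cdot))\,\bigr].
\]
For $f \in \mathcal{S}_{\op{i}}(\Sigma)$ the plane $V_x := \op{d} f_x(\op{T}_x \Sigma) \subset \op{T}_{f(x)} M$ is symplectic, so $\op{T}_{f(x)} M = V_x \oplus V_x^{\omega}$; writing $\tau = \tau^{\top} + \tau^{\perp}$ along this splitting, a direct computation on a basis of $V_x$ identifies the second term of the bracket with $\omega(\tau^{\top}, \tau'^{\top})\, f^{*}\omega$, and combined with $\omega(\tau, \tau') = \omega(\tau^{\top}, \tau'^{\top}) + \omega(\tau^{\perp}, \tau'^{\perp})$ (by $\omega$-orthogonality of the splitting) this collapses the whole bracket to $\omega(\tau^{\perp}, \tau'^{\perp})\, f^{*}\omega$. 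Hence
\[
\omega_{\mathcal{S}_{\op{i}}(\Sigma)}(\tau, \tau')(f) = 2\int_\Sigma \omega(\tau^{\perp}, \tau'^{\perp})\, f^{*}\omega.
\]
If $\tau$ is not everywhere tangent, pick $x_0$ with $\tau^{\perp}(x_0) \neq 0$; non-degeneracy of $\omega|_{V_{x_0}^{\omega}}$ lets me choose $v_0 \in V_{x_0}^{\omega}$ with $\omega(\tau^{\perp}(x_0), v_0) \neq 0$, and extending $v_0$ to a local section of $f^{*}\op{T} M$ and multiplying by a bump function supported in a small neighborhood of $x_0$ on which $f^{*}\omega$ has a consistent orientation produces a $\tau'$ with $\omega_{\mathcal{S}_{\op{i}}(\Sigma)}(\tau, \tau')(f) \neq 0$. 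The delicate step is the algebraic collapse into a pairing on the symplectic complement; once that is in hand, the localization is routine.
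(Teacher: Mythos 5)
Your proposal is correct, and for three of the four assertions it is essentially the paper's argument: well-definedness and the vanishing on everywhere-tangent $\tau$ both reduce to the observation packaged in Lemma~\ref{lemma3} (three vectors in the at most two-dimensional subspace $\op{d}\!f_x(\op{T}_x\Sigma)$ are dependent, so the contracted $4$-form dies), and your closedness argument via $\op{d}\circ\pi_{*}=\pi_{*}\circ\op{d}$ over the closed fiber $\Sigma$ is just the operator form of the paper's computation $\int_{R_i}\omega_{\op{C}^{\infty}(\Sigma,M)}=\int_{R_i\times\Sigma}\op{ev}^{*}(\omega\wedge\omega)$ followed by Stokes over homologous surfaces. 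Where you genuinely diverge is the ``only if'' direction. The paper takes $\tau_{\perp}$ to be the Riemannian-orthogonal projection of $\tau$ onto the normal bundle and then hand-picks a test section $\tau_1$ that is symplectically orthogonal to $\op{d}\!f(\op{T}\Sigma)$, precisely so that the cross term $\omega(\mu_1,\op{d}\!f(\cdot))\wedge\omega(\mu_2,\op{d}\!f(\cdot))$ vanishes and no algebraic identity is needed. You instead use the symplectic splitting $f^{*}(\op{T}\!M)=V\oplus V^{\omega}$ (available exactly because $f\in\mathcal{S}_{\op{i}}(\Sigma)$ makes each $V_x=\op{d}\!f_x(\op{T}_x\Sigma)$ a symplectic plane) and collapse the whole integrand to $2\,\omega(\tau^{\perp},\tau'^{\perp})\,f^{*}\omega$; your identification of the cross term with $\omega(\tau^{\top},\tau'^{\top})\,f^{*}\omega$ on the plane $V_x$ checks out, and your minus sign in the expansion of $\iota_{\tau\wedge\tau'}(\omega\wedge\omega)$ is the consistent one (the paper prints a plus, which is harmless there because that term is arranged to vanish, but your collapse does require the minus). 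What your route buys is a clean intrinsic formula for $\omega_{\mathcal{S}_{\op{i}}(\Sigma)}$ modulo tangent directions, from which both vanishing statements, and in fact the computations behind Proposition~\ref{integcomp}, can be read off at once; the paper's route avoids verifying the pointwise identity at the cost of an ad hoc choice of test vector. The final localization by a bump function near a point where $\tau^{\perp}\neq 0$, using that $f^{*}\omega$ is an area form, is the same in both.
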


Heuristically, the theorem says that the form $\omega_{\mathcal{S}_{\op{i}}(\Sigma)}$ 
descends to a non\--degenerate closed $2$\--form on the quotient space of  unparametrized 
$\Sigma$-curves. We prove the theorem in Section \ref{prop}.

\vspace{1mm}

Consider the space of $\omega$-compatible almost complex structures $\J=\J(M,\,\omega)$ on $(M,\,\omega)$.  Fix $\Sigma=(\Sigma,\, j)$, where $j$ is a complex structure on $\Sigma$. 
The moduli space 
$\M_{\scriptop{i}}(A,\,\Sigma,\,J)$ is defined as the intersection of $\mathcal{S}_{\op{i}}(\Sigma)$ with the moduli space $\M(A,\,\Sigma,\,J)$ of simple $J$-holomorphic $\Sigma$-curves in a homology class $A \in \op{H}_2(M,\,\Z)$. 
The universal moduli space  $\M_{\scriptop{i}}(A,\,\Sigma,\,\J)$ is defined as the space of pairs $\{(f,\,J) \,\, | \,\, f \in \M_{\scriptop{i}}(A,\,\Sigma,\,J)\}$.
We look at almost complex structures that are regular for the projection map $$p_{A} \colon \M(A,\,\Sigma, \,\mathcal{J}) \to \J;$$ for such a  $J$, the space $\M_{\scriptop{i}}(A,\,\Sigma,\,J)$ is a finite-dimensional manifold. Denote the set of $p_{A}$-regular $\omega$\--compatible almost complex structures by  $\mathcal{J}_{\scriptop{reg}}(A).$ This set is of the second category in $\J$. 
A class $A \in \op{H}_2(M,\, \Z)$ is \emph{$J$-indecomposable} if it does not split as a sum 
$A_1 + \ldots + A_k$ of classes all of which can be represented by non-constant $J$-holomorphic curves. 
We give the necessary background on $J$\--holomorphic curves in Section \ref{mod}.

\vspace{1mm}

If $J_* \in \mathcal{J}_{\scriptop{reg}}(A)$ is integrable, then the restriction of the form $\omega_{\mathcal{S}_{\op{i}}(\Sigma)}$ to $\M_{\scriptop{i}}(A,\,\Sigma,\,J_*)$ is non-degenerate, up to reparametrizations; see  Proposition \ref{propintsym}. Therefore under some conditions on a path starting from $J_*$, there is a neighborhood $U$ of $J_*$ in the path such that for every $J \in U$, the restriction  of the form $\omega_{\mathcal{S}_{\op{i}}(\Sigma)}$ to the finite-dimensional manifold $\M_{\scriptop{i}}(A,\,\Sigma,\,J)$ is non-degenerate up to reparametrizations; see Lemma \ref{open}. This form descends to a symplectic $2$-form 
${\widetilde{\omega}}_{\mathcal{S}_{\op{i}}(\Sigma)}$ on the quotient space  ${\widetilde{\M}}_{\scriptop{i}}(A,\,\Sigma,\,J)$ of  $\M_{\scriptop{i}}(A,\,\Sigma,\,J)$ by the proper action of the group  $\Aut(\Sigma,j)$ of bi-holomorphisms of $\Sigma$; 
see Remark \ref{remquo}. Similarly, the form $\omega_{\op{C}^{\infty}(\Sigma,\, M)}$ descends to a closed $2$-form ${\widetilde{\omega}}_{\op{C}^{\infty}(\Sigma,\, M)}$ on the quotient space ${\widetilde{\M}}(A,\,\Sigma,\,J)$ of  $\M(A,\,\Sigma,\,J)$ by the proper action of  $\Aut(\Sigma,j)$.

\vspace{1mm}

The form $ \omega_{\mathcal{S}_{\op{i}}(\Sigma)}$ yields symplectic invariants of $(M,\,\omega,\,A)$. In Section \ref{modim}, we apply Theorem \ref{thclonon}, Gromov's compactness theorem, and Stokes' theorem to deduce the following corollary.
\begin{corollary} \label{vol}
Assume that $M$ is compact.
Let  $S$ be the subset of $p_A$-regular $\omega$-compatible
$J$-s for which the class $A$ is $J$-indecomposable.  
Then 
\begin{equation} \label{int1}
\int_{\widetilde{\mathcal{M}}(A,\,\Sigma,\, J) }{\wedge^{n} {\widetilde{\omega}}_{{\op{C}}^{\infty}(\Sigma,\, M)}}
\end{equation}
is well defined and does not depend on $J \in S$. 

If there is an integrable $J_* \in S$ such that ${\mathcal{M}_{\scriptop{i}}(A,\,\Sigma,\, J_*)} \neq \emptyset$ then  
$$\int_{{\widetilde{\mathcal{M}}}(A,\,\Sigma,\, J)} {\wedge^{n} {\widetilde{\omega}}_{{\op{C}}^{\infty}(\Sigma,\, M)}} \neq 0\,\,\,\,\,\textup{for every} \,\,J \in S.
$$ 
In particular, for every $J \in S$ there exists a $J$-holomorphic curve in $A$.
\end{corollary}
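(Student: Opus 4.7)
My plan is to combine three ingredients: the closedness of $\omega_{\op{C}^{\infty}(\Sigma,\,M)}$ from Theorem \ref{thclonon}, Gromov's compactness theorem applied under the $J$-indecomposability hypothesis (which rules out bubbling), and the non-degeneracy of $\widetilde{\omega}_{\mathcal{S}_{\op{i}}(\Sigma)}$ at the integrable point $J_*$ provided by Proposition \ref{propintsym}. Heuristically, closedness forces the integral to be invariant under deformations of $J$ via Stokes' theorem, while non-degeneracy at $J_*$ forces the integral to have a definite sign there and in particular be nonzero.

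First, for well-definedness, observe that given $J \in S$, $p_A$-regularity makes $\mathcal{M}(A,\,\Sigma,\,J)$ a smooth finite-dimensional manifold, and the proper $\Aut(\Sigma,\,j)$-action yields a smooth quotient $\widetilde{\mathcal{M}}(A,\,\Sigma,\,J)$. The $J$-indecomposability of $A$ precludes non-trivial bubble-tree degenerations of simple $J$-holomorphic curves in class $A$, so Gromov's compactness theorem makes the quotient compact. By Theorem \ref{thclonon}, $\widetilde{\omega}_{\op{C}^{\infty}(\Sigma,\,M)}$ is smooth and closed, so $\wedge^n \widetilde{\omega}_{\op{C}^{\infty}(\Sigma,\,M)}$ is a smooth closed top-degree form on a compact oriented manifold, making the integral in (\ref{int1}) well-defined.

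For independence of $J \in S$, I would take $J_0, J_1 \in S$ and connect them by a smooth path $\{J_t\}_{t \in [0,1]} \subset \mathcal{J}$ chosen so that the parametric universal moduli space is cut out transversally and $A$ remains $J_t$-indecomposable throughout $[0,1]$. This is arranged by a Sard--Smale transversality argument exploiting the second-category property of $p_A$-regularity and the openness of the indecomposability condition in $\mathcal{J}$. Quotienting by $\Aut(\Sigma,\,j)$ then yields a smooth compact oriented cobordism $W$ whose boundary is $\widetilde{\mathcal{M}}(A,\,\Sigma,\,J_0) \sqcup \bigl(-\widetilde{\mathcal{M}}(A,\,\Sigma,\,J_1)\bigr)$. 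Since $\widetilde{\omega}_{\op{C}^{\infty}(\Sigma,\,M)}$ descends from a closed form on $\op{C}^{\infty}(\Sigma,\,M)/\Aut(\Sigma,\,j)$ that is intrinsic to $\op{C}^{\infty}(\Sigma,\,M)$ and hence independent of $J$, its pullback to $W$ and the $n$-th wedge power remain closed; Stokes' theorem equates the two boundary integrals.

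For the non-vanishing, by independence it suffices to treat $J = J_*$. Proposition \ref{propintsym} says that $\widetilde{\omega}_{\mathcal{S}_{\op{i}}(\Sigma)}$ restricts to a symplectic form on the non-empty open subset $\widetilde{\mathcal{M}}_{\scriptop{i}}(A,\,\Sigma,\,J_*)$ of $\widetilde{\mathcal{M}}(A,\,\Sigma,\,J_*)$. On this open set the $n$-th wedge is a nowhere-vanishing volume form, positive relative to the canonical complex orientation induced by the integrable $J_*$, hence integrates to a strictly positive number. The complementary non-immersion locus is a proper complex-analytic subvariety of $\widetilde{\mathcal{M}}(A,\,\Sigma,\,J_*)$ by classical complex analysis for simple holomorphic maps, so has measure zero and contributes nothing. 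Therefore the integral at $J_*$ is strictly positive, hence nonzero for every $J \in S$ by independence, which in turn forces $\widetilde{\mathcal{M}}(A,\,\Sigma,\,J) \neq \emptyset$ and produces a $J$-holomorphic representative of $A$. I expect the main obstacle to lie in the path construction in the independence step --- jointly enforcing parametric transversality and $J_t$-indecomposability along all of $[0,1]$ --- with orientation conventions and the complex-analytic subvariety argument at $J_*$ as secondary technical subtleties.
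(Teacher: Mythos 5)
Your proposal follows essentially the same route as the paper's proof: well-definedness and compactness of the quotient moduli spaces via Gromov compactness under the $J$-indecomposability hypothesis, independence of $J$ via a $p_A$-transversal path, the resulting oriented cobordism from Lemma \ref{consard}(e)--(f), and Stokes' theorem applied to the closed form $\wedge^{n}\widetilde{\omega}_{\op{C}^{\infty}(\Sigma,\,M)}$, with non-vanishing deduced from Proposition \ref{propintsym} at the integrable $J_*$. The additional details you supply --- positivity of the top wedge power for the complex orientation at $J_*$, the measure-zero contribution of the non-immersion locus, and the need to keep the path inside the indecomposability locus --- are points the paper's proof leaves implicit rather than a genuinely different argument.
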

 When $S$ is of the second category, the corollary implies that the integral \eqref{int1} does not depend on $J$ for a generic $J$; in some cases, the corollary implies  the existence of a $J$-holomorphic $\Sigma$-curve in $A$ for a generic $J$.
In Section \ref{examples}, we give examples in which Corollary \ref{vol} and Lemma \ref{open} apply.

\vspace{1mm}

We observe that for
$J \in  \mathcal{J}_{\scriptop{reg}}(A)$ such that  ${\widetilde{\omega}}_{\mathcal{S}_{\op{i}}(\Sigma)} $  on ${\widetilde{\mathcal{M}}}_{\op{i}}(A,\,\Sigma,\, J)$ is symplectic,  we obtain
a canonical almost complex structure on  ${\widetilde{\mathcal{M}}}_{\op{i}}(A,\,\Sigma,\, J)$
 that is compatible with the form;  see Corollary \ref{coralm}. Thus this paper provides a setting to understand a symplectic manifold with a compatible almost complex structure by studying curves on the
moduli spaces of $J$-holomorphic curves, a technique that has been proven to be far reaching in algebraic
geometry.


\section{Properties of the $2$-forms $\omega_{\op{C}^{\infty}(\Sigma,\, M)}$ and $\omega_{\mS_{\op{i}}(\Sigma)}$} \label{prop}

We first observe the following fact, that we will use through this section. 

\begin{lemma} \label{trivial:lem}
For $(\nu,\,v_{\Sigma}) \in \op{T}_{(f,\,x)}(\op{C}^{\infty}(\Sigma,\, M) \times \Sigma)$,
\begin{eqnarray}
  \op{d}(\op{ev}) _{( f, \,x )} (\nu,\,v_{\Sigma}) =  \nu_{f} (x) + \op{d}\! f_{ x } (
v_{\Sigma})  \label{De} \nonumber
\end{eqnarray}
In particular
$\op{d}(\op{ev})|_{\op{T} ( \{f\} \times \Sigma )} =\op{d} \! f$
and 
$\op{d}(\op{ev})_{(f, \, x)}(\nu,0)=\nu_f (x)$.
\end{lemma}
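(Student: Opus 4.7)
The plan is to verify the formula by a direct chain-rule computation, representing the Fr\'echet tangent vector $\nu \in \op{T}_f \op{C}^{\infty}(\Sigma,\, M) \cong \Omega^0(\Sigma,\, f^{\ast}\op{T}\!M)$ by a smooth one-parameter family of maps. First I choose a smooth path $t \mapsto (f_t,\, x_t)$ in $\op{C}^{\infty}(\Sigma,\, M) \times \Sigma$ with $(f_0,\, x_0) = (f,\, x)$, such that $\partial_t f_t|_{t=0} = \nu$ as a section of $f^{\ast}\op{T}\!M$ and $\dot x_0 = v_\Sigma$. By the definition of the differential,
\[
\op{d}(\op{ev})_{(f,\,x)}(\nu,\,v_\Sigma) = \frac{\op{d}}{\op{d}\!t}\bigg|_{t=0} \op{ev}(f_t,\,x_t) = \frac{\op{d}}{\op{d}\!t}\bigg|_{t=0} f_t(x_t).
\]

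Next, I split this derivative along the two product directions. Since the tangent space $\op{T}_{(f,\,x)}(\op{C}^{\infty}(\Sigma,\, M)\times\Sigma)$ decomposes as $\op{T}_f\op{C}^{\infty}(\Sigma,\, M)\oplus\op{T}_x\Sigma$ and $\op{d}(\op{ev})$ is $\R$-linear on tangent vectors, it suffices to compute on each summand separately. Fixing $x_t\equiv x$ and differentiating $t\mapsto f_t(x)$ produces the value at $x$ of the section representing $\nu$, namely $\nu_f(x)\in\op{T}_{f(x)}M$; this is precisely how the Fr\'echet tangent space is identified with $\Omega^0(\Sigma,\, f^{\ast}\op{T}\!M)$. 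Fixing $f_t\equiv f$ and differentiating $t\mapsto f(x_t)$ yields, by the ordinary finite-dimensional chain rule applied to the smooth map $f\colon\Sigma\to M$, the vector $\op{d}\!f_x(v_\Sigma)$. Adding the two contributions gives the asserted identity.

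The two ``in particular'' statements are then immediate. Setting $\nu=0$ shows that the restriction of $\op{d}(\op{ev})$ to the summand $\op{T}(\{f\}\times\Sigma)$ coincides with $\op{d}\!f$, and setting $v_\Sigma=0$ gives $\op{d}(\op{ev})_{(f,\,x)}(\nu,0)=\nu_f(x)$. There is no substantive obstacle to the proof, as the lemma is essentially a chain rule for the evaluation map. The only mild care is with the Fr\'echet structure on $\op{C}^{\infty}(\Sigma,\, M)$, but the identification $\op{T}_f\op{C}^{\infty}(\Sigma,\, M)\cong\Omega^0(\Sigma,\, f^{\ast}\op{T}\!M)$ is defined precisely by differentiation of one-parameter families of maps, so the computation reduces without incident to the familiar finite-dimensional statement.
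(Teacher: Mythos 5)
Your proof is correct: the chain-rule computation via a one-parameter family $(f_t,\,x_t)$, split along the two product directions, is the standard verification of this formula. The paper itself states this lemma without proof, treating it as an immediate observation, and your argument is exactly the routine calculation that justifies it.
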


\subsection*{The form is well defined}

\begin{claim} \label{equi2}
Let $f \colon \Sigma \to M$
and  $\tau_1, \tau_2 \in \op{T}_{f}(\op{C}^{\infty}(\Sigma,\, M))$.
The value of  $\int_{\{f\} \times \Sigma} \iota_{( \ell_1 \wedge
    \ell_2 )} \op{ev}^{\ast} ( \omega \wedge \omega )$
 does not depend on the choice of
liftings $\ell_i$ of $\tau_i$. 
\end{claim}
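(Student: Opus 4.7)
My plan is to show that if $\ell_i$ and $\ell_i'$ are two liftings of the same $\tau_i$, then the integrand of the definition agrees pointwise on $\{f\} \times \Sigma$; independence of the integral follows immediately. First I will unpack the lifting condition. A tangent vector to $\op{C}^{\infty}(\Sigma,\, M) \times \Sigma$ at $(f,\, x)$ decomposes as $(\nu,\, v_{\Sigma})$ with $\nu \in \op{T}_f(\op{C}^{\infty}(\Sigma,\, M))$ and $v_{\Sigma} \in \op{T}_x \Sigma$, and the condition $\op{d}(\pi_{\op{C}^{\infty}(\Sigma,\, M)})(\ell_i)_{(f,\, x)} = \tau_i$ forces the first coordinate of $\ell_i$ at $(f,\, x)$ to equal $\tau_i$. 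Hence two liftings of the same $\tau_i$ differ by a purely vertical vector $(0,\, w_i)$, where $w_i$ is a smooth vector field on $\Sigma$. By bilinearity of the wedge product and multilinearity of the interior product, it suffices to show that when one of the two contracted vectors is replaced by a purely vertical $(0,\, w)$, the restriction of the resulting $2$-form to $\{f\} \times \Sigma$ vanishes.

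I will verify this pointwise. Tangent vectors to $\{f\} \times \Sigma$ at $(f,\, x)$ have the form $(0,\, u)$ with $u \in \op{T}_x \Sigma$, so by Lemma \ref{trivial:lem} the relevant evaluation becomes
$$ \bigl(\op{ev}^{\ast}(\omega \wedge \omega)\bigr)_{(f,\, x)}\bigl((0,\, w),\, \eta,\, (0,\, u_1),\, (0,\, u_2)\bigr) = (\omega \wedge \omega)_{f(x)}\bigl(\op{d}\!f_x(w),\, \op{d}(\op{ev})(\eta),\, \op{d}\!f_x(u_1),\, \op{d}\!f_x(u_2)\bigr), $$
where $\eta$ denotes the other contracted vector. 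The three vectors $\op{d}\!f_x(w),\, \op{d}\!f_x(u_1),\, \op{d}\!f_x(u_2)$ all lie in the subspace $\op{d}\!f_x(\op{T}_x \Sigma) \subseteq \op{T}_{f(x)} M$, which has dimension at most two; hence they are linearly dependent, and the alternating $4$-form $\omega \wedge \omega$ evaluates to zero on any $4$-tuple in which three entries are linearly dependent.

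Therefore the integrand vanishes identically on $\{f\} \times \Sigma$, and the integral is independent of the liftings $\ell_1,\, \ell_2$. The whole argument rests on the dimension count that forces three of the four arguments of $\omega \wedge \omega$ into the at most $2$-dimensional image of $\op{d}\!f_x$; once this is observed, no further obstacle arises, and the same observation will also be the engine of the vanishing statement in Theorem~\ref{thclonon} about vectors tangent to $f(\Sigma)$.
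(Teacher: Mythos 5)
Your proof is correct and follows essentially the same route as the paper: both reduce, by multilinearity, to showing that the terms involving the vertical difference $(0,\,w)$ of two liftings have identically vanishing integrand on $\{f\}\times\Sigma$, and both conclude via the linear-algebra observation (the paper's Lemma~\ref{lemma3}) that an alternating $4$-form vanishes whenever three of its arguments lie in the at most $2$-dimensional subspace $\op{d}\!f_x(\op{T}_x\Sigma)$. No gaps.
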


\begin{proof}

Let $\ell_i, \ell'_i$ be two pairs of liftings of $\tau_i$.
 Let 
$$
{v_i}_{( f, \,x )} := {\ell_i'}_{( f, \,x )} -{\ell_i}_{( f,\,x )}.
$$  
Then
$${v_i}_{ ( f, \,x ) }= (0,\,v_{\Sigma}^i ) \in \op{T}(\op{C}^{\infty}(\Sigma,\, M)
\times \Sigma),
$$
and so by Lemma \ref{trivial:lem}
$$
\op{d}(\op{ev})({v_i}_{ (f,\,x )}) =\op{d}\!f_{x} (v_{\Sigma}^i) \in \op{d} \! f_{x}(\op{T}_{x}\Sigma).
$$

On the other hand
we have that
\begin{eqnarray} 
    \int_{\{f\} \times \Sigma} \iota_{( \ell_1' \wedge \ell_2' )} \op{ev}^{\ast} (
    \omega \wedge \omega ) \nonumber
 \end{eqnarray}
 is equal to
 $$
 \int_{\{f\} \times \Sigma} \iota_{( \ell_1 \wedge
    \ell_2 )} \op{ev}^{\ast} ( \omega \wedge \omega ) + \int_{\{f\} \times
    \Sigma} \iota_{( v_1 \wedge \ell_2 )} \op{ev}^{\ast} ( \omega \wedge
    \omega ) + \int_{\{f\} \times \Sigma} \iota_{(\ell_1 \wedge v_2 )}
    \op{ev}^{\ast} ( \omega \wedge \omega ) + \int_{\{f\} \times \Sigma}
    \iota_{( v_1 \wedge v_2 )} \op{ev}^{\ast} ( \omega \wedge \omega).
 $$
 
To complete the proof it is enough to show that the last three terms vanish. We
will show that
  their integrands are identically zero when restricted to $\op{T}( \{f\} \times
  \Sigma$). Let $z_1,\,z_2$ be any pair of vectors in
  $\op{T}(\{f\} \times \Sigma)$.  Then 
  \begin{eqnarray}
   \Big( \iota_{( v_1 \wedge l_2 )} \op{ev}^{\ast} ( \omega \wedge \omega) \Big)
   ( z_1, \, z_2 ) & = & \Big( \iota_{(\op{d}(\op{ev})( v_1 ) \wedge \op{d}(\op{ev})(\ell_2 )
    )} (\omega \wedge \omega) \Big) ( \op{d}(\op{ev})( z_1 ), \, \op{d}(\op{ev})( z_2 ) )
\nonumber\\
    & = & \Big( \iota_{(\op{d}\!f ( v_{\Sigma}^1 ) \wedge \op{d}(\op{ev})(\ell_2 ) )}
(\omega
    \wedge \omega) \Big) (\op{d}\!f ( z_1 ), \, \op{d}\!f ( z_2 ) ). \label{ex1}
      \end{eqnarray}

Similarly we obtain that
 \begin{eqnarray}
       \Big(\iota_{( \ell_1 \wedge v_2 )} \op{ev}^{\ast} ( \omega \wedge \omega )\Big)
       ( z_1, \,z_2 ) & = & \Big(\iota_{( \op{d}(\op{ev})(\ell_1 ) \wedge \op{d} (\op{ev})(
       v_2 ) )} (\omega \wedge \omega) \Big) (\op{d}(\op{ev})( z_1 ),\, \op{d}(\op{ev})(
       z_2 ) ) \nonumber \\
       & = & \Big( \iota_{( \op{d}(\op{ev})(\ell_1 ) \wedge \op{d}\!f ( v_{\Sigma}^2 ) )}
       (\omega \wedge \omega) \Big)( \op{d}\!f ( z_1 ), \,\op{d}\!f ( z_2 ) ), \label{ex2}
     \end{eqnarray}
and that
       \begin{eqnarray}
       \Big(\iota_{( v_1 \wedge v_2 )} \op{ev}^{\ast} ( \omega \wedge \omega )\Big)
       ( z_1, \,z_2 ) & = & \Big( \iota_{( \op{d}(\op{ev})( {v}_1 ) \wedge
       \op{d}(\op{ev})( v_2 ) )} (\omega \wedge \omega) \Big)(\op{d}(\op{ev})( z_1 ),\,
       \op{d}(\op{ev})( z_2 ) )\nonumber \\
       & = & \Big( \iota_{(\op{d}\!f ( v_{\Sigma}^2 ) \wedge \op{d} \!f ( v_{\Sigma}^2))}
       (\omega \wedge \omega) \Big)(\op{d}\! f ( z_1 ),\, \op{d}\! f ( z_2 ) ). \label{ex3}
  \end{eqnarray}

The terms    
    $$
     \iota_{(\op{d}\!f ( v_{\Sigma}^1 ) \wedge \op{d}(\op{ev})(\ell_2 ) )}
(\omega
    \wedge \omega),\,\,\,\,
     \iota_{( \op{d}(\op{ev})(\ell_1 ) \wedge \op{d}\!f ( v_{\Sigma}^2 ) )}
       (\omega \wedge \omega) ,\,\,\,\,
       \iota_{(\op{d}\!f ( v_{\Sigma}^2 ) \wedge \op{d} \!f ( v_{\Sigma}^2))}
       (\omega \wedge \omega)
  $$
   each vanish when restricted to the $2$\--dimensional subspace $\op{d}\!f_{x} (\op{T}_{x} \Sigma )   \subset \op{T}_{f(x)} M$, by Lemma  \ref{lemma3}.
  It follows that the right hand side in expressions (\ref{ex1}), (\ref{ex2}) and
  (\ref{ex3}) identically vanishes. Hence
   \begin{eqnarray} \label{equality}
    \int_{\{f\} \times \Sigma} \iota_{(\ell'_1 \wedge \ell'_2 )} \op{ev}^{\ast} ( \omega
  \wedge \omega)= \int_{\{f\} \times \Sigma} \iota_{(\ell_1 \wedge
  \ell_2 )} \op{ev}^{\ast} ( \omega \wedge \omega). \nonumber
      \end{eqnarray}
\end{proof}

\begin{lemma} \label{lemma3}
   Let $W$ be a vector space, and let $\alpha$ be a
  $4$\--form: $\alpha \colon \bigwedge^4 W \rightarrow \R.$ 
  Let $V \subset W$ be a
  subspace of dimension $\leq 2$. Then
 $\Big(\iota_{( v \wedge w )} \alpha\Big) |_V = 0$
  for all $v \in V$, $w \in W$.  
 \end{lemma}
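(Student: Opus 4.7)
The plan is to unwind the definition of the restriction and exploit the alternating property of $\alpha$ against the dimension constraint on $V$. Explicitly, for any $z_1, z_2 \in V$, the value of $(\iota_{v \wedge w}\alpha)|_V$ on $(z_1, z_2)$ is $\alpha(v, w, z_1, z_2)$. The three vectors $v, z_1, z_2$ all lie in $V$, which has dimension at most $2$, so they are linearly dependent.

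Next I would invoke the standard fact that any alternating multilinear form vanishes on a linearly dependent tuple. Since $v, z_1, z_2 \in V$ are three vectors in a space of dimension $\leq 2$, one of them is a linear combination of the other two; substituting this expression into the first slot (or whichever slot it appears in) of $\alpha$ and using multilinearity reduces $\alpha(v, w, z_1, z_2)$ to a sum of $4$-form evaluations each having a repeated argument, and these vanish by antisymmetry.

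Thus $\alpha(v, w, z_1, z_2) = 0$ for all $z_1, z_2 \in V$, which is exactly the claim that $(\iota_{v \wedge w}\alpha)|_V = 0$. There is no real obstacle here; the only small point to be careful about is that $w$ need not lie in $V$, but this is irrelevant because the dimension constraint already forces the three $V$-vectors $v, z_1, z_2$ to be dependent, independently of $w$.
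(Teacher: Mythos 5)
Your proof is correct and is exactly the paper's argument, which the authors compress into the single remark that any three vectors in $V$ are linearly dependent; you simply spell out the evaluation $(\iota_{v\wedge w}\alpha)(z_1,z_2)=\alpha(v,w,z_1,z_2)$ and the vanishing of an alternating form on a dependent tuple. No differences worth noting.
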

This is the case since  any three vectors in
$V$ are linearly
  dependent.

\subsection*{The form is closed}

\begin{proof}[Proof of the closedness part of Theorem \ref{thclonon}]

The $2$\--form ${\omega}_{\op{C}^{\infty}(\Sigma,\, M)}$
is closed if and only if for any
  two surfaces $R_1$ and $R_2$ in
   $\op{C}^{\infty}(\Sigma,\, M)$, 
   that are homologous relative
  to a common boundary $\partial R$,
  \begin{eqnarray} \label{xx}
  \int_{R_1} {\omega}_{\op{C}^{\infty}(\Sigma,\, M)}
  = \int_{R_2} 
  {\omega}_{\op{C}^{\infty}(\Sigma,\, M)}.
\end{eqnarray}
 
 Now (by considering the liftings that are zero along $\Sigma$), 
  $$
  \int_{R_i} {\omega}_{\op{C}^{\infty}(\Sigma,\, M)}
   = \int_{R_i \times \{x\}} \left( \int_{\{f\} \times
     \Sigma} \op{ev}^{\ast} ( \omega \wedge \omega )  \right) =
     \int_{R_i \times \Sigma} \op{ev}^{\ast} (\omega \wedge \omega).
 $$
Since $R_1 \times \Sigma$ is homologous to $R_2 \times \Sigma$ relative to the
  boundary $\partial R \times \Sigma$, and $\op{ev}^{\ast} ( \omega \wedge
  \omega)$ is closed, we have that:
  $$
  \int_{R_1 \times \Sigma} \op{ev}^{\ast} ( \omega \wedge \omega )
      = \int_{R_2 \times \Sigma} \op{ev}^{\ast} ( \omega \wedge
     \omega ).
     $$
 Therefore we get \eqref{xx}.
\end{proof}

\subsection*{Directions of degeneracy}

\begin{proof}[Proof of the domain of non-degeneracy in Theorem \ref{thclonon}]
\textup{\,}
\\
{\bf Case 1}.
{\em  Suppose that $f \colon \Sigma \to M$, and $\tau \colon \Sigma \to f^{\ast}\op{T}M$ is everywhere tangent to $f ( \Sigma )$.}
 \\
  We claim that ${\omega_{\op{C}^{\infty}(\Sigma,\, M)}}
   _{f}(\tau,\,\cdot)=0$.  To see this, lift $\tau$ to a vector field $\ell=(\tau,\,0)$ along 
  $\op{C}^{\infty}(\Sigma,\, M)
  \times \Sigma$; let $\tau_2 \in \op{T}_{f} (\op{C}^{\infty}(\Sigma,\, M))
  $ and $\ell_2$ a lifting of $\tau_2$. We show that the integrand $\iota_{\ell \wedge \ell_2} \op{ev}^{*} (\omega \wedge \omega)$ vanishes when restricted to $\op{T}(\{f\} \times \Sigma)$. 
 Indeed, for $z_1,\,z_2 \in \op{T}_x(\{f\} \times \Sigma),$
$$ \iota_{\ell \wedge \ell_2} \op{ev}^{*}(\omega \wedge \omega)_x(z_1,\,z_2) = \iota_{\tau \wedge \op{d} \op{ev} (\ell_2)}(\omega \wedge \omega)_{f(x)}(\op{d}\! f (z_1), \,\op{d}\! f(z_2)).$$ So it is enough to show that 
$$ \iota_{\tau \wedge \op{d} \op{ev} (\ell_2)}(\omega \wedge \omega) | _{\op{d}\!f(\op{T}_{x} \Sigma)}$$ vanishes. This follows from Lemma \ref{lemma3}, since, by assumption $\tau(x) \in \op{d}\!f_{x}(\op{T}_{x}\Sigma)$ and $\, \op{d}\!f_{x}(\op{T}_{x}\Sigma) \subset \op{T}_{f(x)}M$ is a $2$\--dimensional subspace.
\\
\\
{\bf Case 2}.  {\em Suppose that $f \in {\mathcal{S}_{\op{i}}(\Sigma)}$, and $\tau \in \op{T}_{f}({\mathcal{S}_{\op{i}}(\Sigma)})$ is not tangent to $f ( \Sigma )$ at $x \in \Sigma$.}
\\
Let  $\tau_{\perp} \in (f^{\ast} \op{T}\! M)_{x}$ denote (a representative of) the orthogonal projection of $\tau$ to the normal bundle to  $\op{d}\!f (\op{T}\Sigma)$. In particular, $\tau_{\perp}(x) \neq 0$.
  Let $\tau_1$ be a vector in $( f^{\ast} (\op{T}\! M))_x$ such that 
  $$\omega(\tau_{\perp} (x),\,\tau_1)=\omega_{f(x)}(\tau_{\perp} (x),\,\tau_1) > 0,$$ 
  and $\tau_1$ is symplectically orthogonal to $\op{d}\!f_x ( \op{T}_x \Sigma )$.

  Now extend $\tau_1$ to a section $\tau_1  \colon
  \Sigma \rightarrow f^{\ast}(\op{T}\! M)$ such that $\omega( \tau_{\perp}( y), \, \tau_1(y) ) > 0$ and $\tau_1(y)$ is symplectically orthogonal to $\op{d}\!f_y ( \op{T}_y \Sigma )$ for $y$ in a small
  neighborhood of $x$, and vanishing outside it.
   Let $\ell$ and $\ell_1$ be liftings of $\tau_{\perp}$ and $\tau_1$ that are zero along $\Sigma$. 
   We claim that   ${\omega_{\mathcal{S}_{\op{i}}(\Sigma)}}_{f}(\tau_{\perp},\tau_1)
 \neq 0$.

Notice that, in general for vectors $\mu_1, \, \mu_2 \colon
  \Sigma \rightarrow f^{\ast}(\op{T}\! M)$ and their zero-liftings $k_1^0=(\mu_1,0), \, k_2^0=(\mu_2,0)$, we have that for $z_1,\,z_2 \in \op{T}(\{f\} \times \Sigma)$,
  \begin{eqnarray}
  \iota_{k_1^0\wedge k_2^0} (\op{ev}^*(\omega \wedge \omega))(z_1, z_2)
  &=&\Big(\iota_{(\op{d}(\op{ev})(k_1^0) \wedge \op{d}(\op{ev})(k_2^0))}(\omega \wedge \omega)\Big)
(\op{d}(\op{ev})(z_1),\, \op{d}(\op{ev})(z_2)) \nonumber \\
&=&\Big(\iota_{\mu_1\wedge \mu_2}(\omega \wedge \omega)\Big)(\op{d}\!f(z_1),\, \op{d}\!f(z_2)) \nonumber \\
&=& 2 \, \omega(\mu_1,\, \mu_2)\, \omega(\op{d}\!f(z_1),\, \op{d}\!f(z_2)) + 2 \omega (\mu_1, \, \op{d}\!f(\cdot)) \wedge \omega(\mu_2, \, \op{d}\!f(\cdot))(z_1,\,z_2). \nonumber
  \end{eqnarray}
   In particular, and since $\tau_1(y)$ is  symplectically orthogonal to $\op{d}\!f_y( \op{T}_y \Sigma )$ for every $y \in \Sigma$ (hence the second summand in the last term vanishes), we get that $$\int_{\{f\} \times \Sigma}  \iota_{( \ell \wedge \ell_1 )} \op{ev}^{\ast} ( \omega \wedge \omega )=
  \int_{\Sigma} \omega(\tau_{\perp},\tau_1) f^{\ast}\omega \neq 0,$$
  where the last inequality follows from the choice of $\tau_1$, and the fact that, by definition of ${\mathcal{S}_{\op{i}}(\Sigma)}$, the form
 $f^{\ast}\omega$ is a symplectic $2$-form on a surface, hence an area-form.
                 
   Since, (by Case 1),
  ${\omega_{\mathcal{S}_{\op{i}}(\Sigma)}} _{f}( \tau, \,\tau_1 ) ={\omega_{\mathcal{S}_{\op{i}}(\Sigma)}}_{f} ( \tau_{\perp}, \,\tau_1 )$, 
  we deduce  that
  $$
  {\omega_{\mathcal{S}_{\op{i}}(\Sigma)}}_{f} ( \tau_, \,\tau_1 )  \neq 0.
  $$
 \end{proof}

\subsection*{Compatible almost complex structures on $\mathcal{S}_{\op{i}}(\Sigma)$}

An \emph{almost complex structure} on a manifold $M$ is an automorphism of
the tangent bundle, $$J \colon \op{T}\!M \to \op{T}\!M,$$ such that $J^2 = -\op{Id}$.
The pair $(M,\,J)$
is called an \emph{almost complex manifold}. 

An almost complex structure is
\emph{integrable} if it is induced from a complex manifold structure. In
dimension two any almost complex manifold is integrable (see, e.g.,
\cite[Th.~4.16]{MS}). In higher dimensions this is not true
\cite{calabi}.

An almost complex
structure $J$ on $M$ is 
\emph{tamed} by a symplectic form $\omega$ if  $\omega_{x}(v,\,Jv) >0$ for all non-zero $v \in T_x M$. If, in addition, $\omega_{x}(Jv,\,Jw)=\omega_{x}(v\,,w)$ for all $v, \, w \in T_x M$, we say that $J$ is \emph{$\omega$-compatible}. 
The space $\mathcal{J}(M,\omega)$ of $\omega$-compatible almost complex structures is non-empty and contractible, in particular path-connected \cite[Prop.~4.1]{MS}.

\begin{definition} \label{defj}
  Let $J$ be an almost complex structure on $M$. We define a map
  $$
  \tilde{J}:=J_{\mathcal{S}_{\op{i}}(\Sigma)} : \op{T}\!\mathcal{S}_{\op{i}}(\Sigma) \rightarrow 
  \op{T}\!\mathcal{S}_{\op{i}}(\Sigma)
  $$
   as follows:
for $\tau \colon \Sigma \rightarrow f^{\ast} (\op{T}\!M)$, the vector $\tilde{J}(\tau)$
  is the section
  $
  \hat{J} \circ \tau,
  $
 where $\hat{J}$ is the map defined by the commutative diagram
   \begin{eqnarray} 
\xymatrix{ \ar @{} [dr] |{\circlearrowleft}
f^*(\op{T}\!M)  \ar[r]^{\hat{J}}      \ar[d]  &  f^*(\op{T}\!M)
                  \ar[d]   \\
                  \op{T}\!M \ar[r]^J   &       \op{T}\!M } \nonumber
\end{eqnarray}     
 \end{definition}

Since $\mathcal{S}_{\op{i}}(\Sigma)$ is an open subset of $\op{C}^{\infty}(\Sigma,\, M)$, we get that $\op{T}\!\mathcal{S}_{\op{i}}(\Sigma)$ is indeed closed under $\tilde{J}$. Due to the properties of the almost complex structure $J$, the map $\tilde{J}$ is an automorphism and ${\tilde{J}}^{2}=  -\op{Id}$.
\begin{claim} 
 Let $J$ be an almost complex structure on $M$.
 Then $\tilde{J}$ is an almost complex structure on $\mathcal{S}_{\op{i}}(\Sigma)$. 
\end{claim}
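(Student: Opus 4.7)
The plan is to verify the three defining properties of an almost complex structure on the Fréchet manifold $\mathcal{S}_{\op{i}}(\Sigma)$: that $\tilde{J}$ is a well-defined smooth bundle endomorphism of $\op{T}\!\mathcal{S}_{\op{i}}(\Sigma)$, that it is fiberwise $\R$-linear, and that $\tilde{J}^2 = -\op{Id}$. Since most of this was anticipated in the paragraph preceding the claim, the proof amounts to spelling out these checks in a structured way.

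First I would identify the tangent space: because $\mathcal{S}_{\op{i}}(\Sigma)$ is open in $\op{C}^{\infty}(\Sigma,\, M)$, one has $\op{T}_f \mathcal{S}_{\op{i}}(\Sigma) = \Omega^0(\Sigma,\, f^{\ast}(\op{T}\!M))$. Next I would verify that $\tilde{J}_f$ maps this space to itself: for any smooth section $\tau \colon \Sigma \to f^{\ast}(\op{T}\!M)$, the composition $\hat{J} \circ \tau$ is smooth because $\hat{J}$ is the pullback by $f$ of the smooth bundle map $J \colon \op{T}\!M \to \op{T}\!M$, and so it is itself a smooth bundle map. Fiberwise $\R$-linearity of $\tilde{J}_f$ follows because $J$ is $\R$-linear on each fiber of $\op{T}\!M$, hence $\hat{J}$ is $\R$-linear on each fiber of $f^{\ast}(\op{T}\!M)$, and this passes to the level of sections.

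Then I would check the pointwise relation $\hat{J}^2 = -\op{Id}_{f^{\ast}(\op{T}\!M)}$. This is immediate from the commutative diagram defining $\hat{J}$: applying it twice corresponds to applying $J$ twice in the lower row, and $J^2 = -\op{Id}_{\op{T}\!M}$ by hypothesis. Therefore for every $\tau \in \op{T}_f \mathcal{S}_{\op{i}}(\Sigma)$,
\[
\tilde{J}^{2}(\tau) \;=\; \hat{J} \circ (\hat{J} \circ \tau) \;=\; (\hat{J}^{2}) \circ \tau \;=\; -\tau.
\]
In particular $\tilde{J}$ admits $-\tilde{J}$ as a two-sided inverse, so it is a bundle automorphism of $\op{T}\!\mathcal{S}_{\op{i}}(\Sigma)$.

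The only point that requires care is smoothness of $\tilde{J}$ as a map between Fréchet manifolds, but this is the standard fact that post-composition with a smooth fiber-preserving map induces a smooth map on spaces of smooth sections; no genuine obstacle arises. Collecting these observations proves that $\tilde{J}$ is an almost complex structure on $\mathcal{S}_{\op{i}}(\Sigma)$.
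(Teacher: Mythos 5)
Your proof is correct and follows essentially the same route as the paper, which justifies the claim by the remarks immediately preceding it: openness of $\mathcal{S}_{\op{i}}(\Sigma)$ in $\op{C}^{\infty}(\Sigma,\, M)$ identifies the tangent space with $\Omega^0(\Sigma,\, f^{\ast}(\op{T}\!M))$ and shows closure under $\tilde{J}$, and the fiberwise properties of $J$ give linearity, invertibility, and $\tilde{J}^2=-\op{Id}$. You simply spell out these checks in more detail, including the smoothness of post-composition, which the paper leaves implicit.
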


A smooth ($\op{C}^{\infty}$) curve $f \colon \Sigma \rightarrow M$ from a compact  Riemann surface $(\Sigma,\, j)$  to an almost complex manifold $(M,J)$ is called
$(j,\,J)$\--\emph{holomorphic} if the differential $\op{d}\!f$ is a complex linear map
between the fibers $\op{T}_{p}(\Sigma)\rightarrow \op{T}_{f(p)}(M)$ for all
$p \in
\Sigma$, i.e. 
\[\op{d}\!f_{p}\circ {j}_{p} = {J}_{f(p)}\circ
\op{d}\!f_{p}.\]
 (When $j$ is clear from the context, we call $f$ a 
\emph{$J$\--holomorphic curve}.)

\smallskip
\smallskip
\noindent

Fix $\Sigma=(\Sigma,\,j)$.
\begin{claim} \label{jclaim}
 Let $J$ be an almost complex structure on $M$.
Assume that $f \colon \Sigma \to M$ is $J$-holomorphic. Then, at $x \in \Sigma$, 
\begin{enumerate}
   \item[\textup{1.}] if $\tau(x) \in \op{d}\!f_{x}( \op{T}_{x}\Sigma )$ then $J_{f(x)} (\tau_x) \in \op{d}\!f_{x}( \op{T}_{x}\Sigma )$; 
   \item[\textup{2.}] if $J$ is $\omega$-compatible and $\phi_x$ is symplectically orthogonal with respect to  $\op{d}\!f_{x}( \op{T}_{x}\Sigma )$, then so is $J_{f(x)} (\phi_x)$.
 \end{enumerate}  
\end{claim}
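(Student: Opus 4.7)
The plan is to reduce both statements to the defining intertwining relation $\op{d}\!f_x \circ j_x = J_{f(x)} \circ \op{d}\!f_x$ coming from $J$-holomorphicity, combined (for part~2) with the compatibility identity $\omega(Ju,\,Jv)=\omega(u,\,v)$.

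For part~1, I would write $\tau(x) = \op{d}\!f_{x}(v)$ for some $v \in \op{T}_{x}\Sigma$, which is possible by assumption. Applying $J_{f(x)}$ and using the $J$-holomorphic identity gives
\[
J_{f(x)}(\tau(x)) \;=\; J_{f(x)}\bigl(\op{d}\!f_{x}(v)\bigr) \;=\; \op{d}\!f_{x}\bigl(j_{x}(v)\bigr),
\]
and since $j_{x}(v) \in \op{T}_{x}\Sigma$, the vector $J_{f(x)}(\tau(x))$ lies in $\op{d}\!f_{x}(\op{T}_{x}\Sigma)$, as required.

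For part~2, I would take an arbitrary $w \in \op{T}_{x}\Sigma$ and show that $\omega(J_{f(x)}(\phi_x),\, \op{d}\!f_{x}(w)) = 0$. Using $J^2 = -\op{Id}$, rewrite $\op{d}\!f_{x}(w) = -J_{f(x)}\bigl(J_{f(x)}\op{d}\!f_{x}(w)\bigr)$, and then invoke part~1 to replace $J_{f(x)}\op{d}\!f_{x}(w) = \op{d}\!f_{x}(j_{x}w)$, so that
\[
\omega\bigl(J_{f(x)}\phi_x,\, \op{d}\!f_{x}(w)\bigr) \;=\; -\omega\bigl(J_{f(x)}\phi_x,\, J_{f(x)}\op{d}\!f_{x}(j_{x}w)\bigr) \;=\; -\omega\bigl(\phi_x,\, \op{d}\!f_{x}(j_{x}w)\bigr),
\]
where the last equality uses $\omega$-compatibility of $J$. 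Since $j_{x}w \in \op{T}_{x}\Sigma$ and $\phi_x$ is symplectically orthogonal to $\op{d}\!f_{x}(\op{T}_{x}\Sigma)$, this vanishes.

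Both steps are essentially one-line linear-algebra computations; I do not anticipate a genuine obstacle. The only subtlety worth flagging is that part~2 really requires full $\omega$-compatibility (not just taming), since the argument uses the invariance $\omega(J\cdot,\,J\cdot)=\omega(\cdot,\,\cdot)$, which is why the hypothesis explicitly assumes this.
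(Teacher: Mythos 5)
Your proposal is correct and follows essentially the same route as the paper: part 1 is the identical one-line computation from the intertwining relation, and part 2 uses the same key observation that $J_{f(x)}$ preserves $\op{d}\!f_{x}(\op{T}_{x}\Sigma)$ (via part 1 and $J^2=-\op{Id}$) together with $\omega(J\cdot,\,J\cdot)=\omega(\cdot,\,\cdot)$. The paper phrases this by noting $J_{f(x)}$ maps the subspace onto itself and writing a general tangent vector as $J_{f(x)}(\tau'_x)$, whereas you rewrite $\op{d}\!f_x(w)=-J_{f(x)}\op{d}\!f_x(j_xw)$ explicitly, but these are the same argument.
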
 

\begin{proof}
\begin{enumerate}
\item By assumption $\tau_x=\op{d}\! f_{x}(\alpha)$ for $\alpha \in \op{T}_x \Sigma$. Hence, since $f$ is $J$-holomorphic,
 $$J_{f(x)} (\tau_x)=J_{f(x)} (\op{d}\!f_{x}(\alpha))=\op{d}\! f_{x}(j_{x} \alpha).$$ 
\item By the previous item, $J_{f(x)} (\op{d}\!f_{x}( \op{T}_{x}\Sigma )) \subseteq \op{d}\!f_{x}( \op{T}_{x}\Sigma )$, hence, since $J^2=-\id$, $$J_{f(x)}(\op{d}\!f_{x}( \op{T}_{x}\Sigma ))=\op{d}\!f_{x}( \op{T}_{x}\Sigma ).$$
Let $\tau_x \in \op{d}\!f_{x}( \op{T}_{x}\Sigma )$, then there exists $\tau'_x \in \op{d}\!f_{x}( \op{T}_{x}\Sigma )$ such that $\tau_x=J_{f(x)}(\tau'_x)$. By assumption, $\omega(\phi_x, \tau'_x)=0$. Thus $$\omega(J_{f(x)}(\phi_x),\tau_x)=
\omega(J_{f(x)}(\phi_x),J_{f(x)}(\tau'_x))=\omega(\phi_x,\tau'_x)=0.$$

\end{enumerate}
\end{proof}

\begin{corollary} \label{corj}
Let $J$ an $\omega$-tamed almost complex structure on $M$. Assume that $f \colon \Sigma \to M$ is $J$-holomorphic.  Then, at $x \in \Sigma$, for $W_x=\op{d}\!f_{x}( \op{T}_{x}\Sigma )$, $W_x \cap {W_x}^{\omega}=\{0\}$.
\end{corollary}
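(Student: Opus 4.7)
The plan is to use $\omega$-tameness together with the fact that $J$ preserves $W_x$ for a $J$-holomorphic map. First I would note that since $\Sigma$ is two-dimensional, the subspace $W_x = \op{d}\!f_x(\op{T}_x\Sigma) \subset \op{T}_{f(x)}M$ has dimension at most $2$. If $\op{d}\!f_x = 0$ then $W_x = \{0\}$ and the claim is vacuous, so I would focus on the case $\dim W_x = 2$.

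Next I would invoke item (1) of Claim \ref{jclaim}, which tells us that, since $f$ is $J$-holomorphic, $J_{f(x)}$ maps $W_x$ into itself. Note that this step does \emph{not} require $\omega$-compatibility, only $J$-holomorphicity of $f$, which is exactly what is available under the hypothesis of $\omega$-tameness.

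Now suppose, for contradiction, that there exists a nonzero vector $v \in W_x \cap W_x^{\omega}$. By the previous paragraph, $J_{f(x)} v \in W_x$, so by the definition of the symplectic complement we would have
\[
\omega_{f(x)}(v,\, J_{f(x)} v) = 0.
\]
On the other hand, since $J$ is $\omega$-tamed and $v \neq 0$, the tameness condition gives
\[
\omega_{f(x)}(v,\, J_{f(x)} v) > 0,
\]
a contradiction. Hence $W_x \cap W_x^{\omega} = \{0\}$.

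There is no real obstacle here: the corollary is essentially a two-line consequence of Claim \ref{jclaim}(1) and the definition of tameness, and it simply records that $W_x$ is a symplectic subspace of $\op{T}_{f(x)}M$. The only thing worth being careful about is that the statement is made for $\omega$-tamed (rather than $\omega$-compatible) $J$, so one should use only part (1) of Claim \ref{jclaim}, whose proof does not appeal to compatibility.
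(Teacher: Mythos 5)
Your proof is correct and follows essentially the same route as the paper: apply Claim \ref{jclaim}(1) to see that $J_{f(x)}$ preserves $W_x$, then use tameness to get $\omega(v, J_{f(x)}v) > 0$ for nonzero $v \in W_x$, which rules out $v \in W_x^{\omega}$. Your added remark that part (1) of Claim \ref{jclaim} uses only $J$-holomorphicity (not compatibility) is a correct and worthwhile observation consistent with the paper's argument.
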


\begin{proof}
By part (1) of Claim \ref{jclaim}, if $ v \in W_x$, then $J(v) \in W_x$; since $J$ is $\omega$-tamed, if $v \neq 0$,
$\omega(v,J(v))>0$, hence $0 \neq v \in W_x$ is not in  ${W_x}^{\omega}$.
\end{proof}

Since $\dim {W_x}^{\omega}=2 \dim M - \dim W_x$, this implies the following corollary.
\begin{corollary} \label{corj2}
In the assumptions and notations of Corollary \ref{corj},
$$\op{T}_{f(x)}M=W_x+{W_x}^{\omega}.$$
\end{corollary}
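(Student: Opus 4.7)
The plan is to derive Corollary \ref{corj2} from Corollary \ref{corj} by a one-line dimension count, so the argument is essentially bookkeeping once the two inputs are assembled. First, I would invoke the standard fact from symplectic linear algebra that for any subspace $W$ of a symplectic vector space $(V,\omega)$ one has $\dim W + \dim W^{\omega} = \dim V$; applied to $V = \op{T}_{f(x)}M$ and $W = W_x = \op{d}\!f_x(\op{T}_x\Sigma)$, this gives
\[
\dim W_x + \dim W_x^{\omega} = \dim_{\R}\op{T}_{f(x)}M,
\]
which is precisely the dimension identity noted in the remark preceding the corollary.

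Second, I would feed in Corollary \ref{corj}, whose conclusion $W_x \cap W_x^{\omega} = \{0\}$ supplies the transversality input that $\omega$-tameness of $J$ together with $J$-holomorphicity of $f$ makes available. Combining these via the inclusion-exclusion identity
\[
\dim(W_x + W_x^{\omega}) = \dim W_x + \dim W_x^{\omega} - \dim(W_x \cap W_x^{\omega}),
\]
the right-hand side collapses to $\dim_{\R}\op{T}_{f(x)}M$. Since $W_x + W_x^{\omega}$ is a subspace of $\op{T}_{f(x)}M$ having the same dimension as the ambient space, it must coincide with $\op{T}_{f(x)}M$. In fact the argument yields slightly more: the decomposition is a direct sum, $\op{T}_{f(x)}M = W_x \oplus W_x^{\omega}$.

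There is no genuine obstacle at this stage. All the substantive work has already been done in Corollary \ref{corj}, where $\omega$-tameness of $J$ and $J$-holomorphicity of $f$ were combined, via part (1) of Claim \ref{jclaim} and the taming inequality $\omega(v,Jv) > 0$, to exclude a nonzero vector from lying simultaneously in $W_x$ and $W_x^{\omega}$. The present corollary is a purely linear-algebraic consequence of that transversality together with the symplectic-complement dimension formula; no further geometric input is required.
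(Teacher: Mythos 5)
Your proof is correct and is essentially identical to the paper's argument: the paper also combines the trivial intersection $W_x \cap W_x^{\omega} = \{0\}$ from Corollary \ref{corj} with the dimension formula for the symplectic complement to conclude that $W_x + W_x^{\omega}$ exhausts $\op{T}_{f(x)}M$. Your added observation that the sum is in fact direct is a correct (and implicit in the paper) strengthening.
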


Recall that if a bundle $E \to B$ equals the direct sum of sub-bundles $E_1 \to B$ and $E_2 \to B$, then the space of sections of $E$ equals the direct sum of the space of sections of $E_1$ and the space of sections of $E_2$. Thus, Corollary \ref{corj2} implies the following corollary. 
\begin{corollary} \label{corj3}
Let $J$ an $\omega$-tamed almost complex structure on $M$. Assume that $f \colon \Sigma \to M$ is $J$-holomorphic. 
 Then every $\mu \in \op{T}_f({\mS_{\op{i}}(\Sigma))}$ can be uniquely decomposed as $$\mu=\mu'+\mu'',$$ where 
 $\mu'$ is everywhere tangent to $f(\Sigma)$ (i.e. $\mu' (x) \in  \op{d}\!f_{x} (\op{T}_{x} \Sigma)$ at every $x \in \Sigma$), and $\mu''(x)$ is symplectically orthogonal to  $\op{d}\!f_{x}( \op{T}_{x}\Sigma )$ at every $x \in \Sigma$.
\end{corollary}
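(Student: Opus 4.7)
The plan is to promote the pointwise decomposition from Corollary \ref{corj2} to a decomposition at the level of sections, using the fact cited just before the statement: a direct-sum decomposition of a smooth vector bundle induces a direct-sum decomposition of its space of sections.

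First, I would verify that $W \to \Sigma$ defined by $W_x := \op{d}\!f_x(\op{T}_x\Sigma) \subset (f^{\ast}\op{T}\!M)_x$ is a smooth rank-$2$ sub-bundle of $f^{\ast}\op{T}\!M$. This uses that $f$ is an immersion (so that $\op{d}\!f$ has constant rank $2$), together with the fact that $f \in \mS_{\op{i}}(\Sigma)$. Next, I would verify that the symplectic-orthogonal bundle $W^{\omega} \to \Sigma$, whose fiber at $x$ is $\{v \in \op{T}_{f(x)}M : \omega(v, w) = 0 \text{ for all } w \in W_x\}$, is a smooth sub-bundle of $f^{\ast}\op{T}\!M$ of corank $2$; this is standard since $\omega$ is non-degenerate and $W$ is smooth.

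Now I would invoke Corollary \ref{corj} and Corollary \ref{corj2} to conclude that at every $x \in \Sigma$,
\[
(f^{\ast}\op{T}\!M)_x = W_x \oplus W_x^{\omega}.
\]
Since both $W$ and $W^{\omega}$ are smooth sub-bundles with complementary fibers, this gives a smooth direct-sum decomposition $f^{\ast}\op{T}\!M = W \oplus W^{\omega}$ of bundles over $\Sigma$. By the fact recalled in the paragraph preceding the corollary, any section $\mu \in \Omega^0(\Sigma, f^{\ast}\op{T}\!M) = \op{T}_f(\mS_{\op{i}}(\Sigma))$ decomposes uniquely as $\mu = \mu' + \mu''$ with $\mu'$ a section of $W$ and $\mu''$ a section of $W^{\omega}$. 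By construction, $\mu'(x) \in \op{d}\!f_x(\op{T}_x\Sigma)$ for every $x$, and $\mu''(x)$ is symplectically orthogonal to $\op{d}\!f_x(\op{T}_x\Sigma)$ for every $x$, as required.

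The only step with any subtlety is checking smoothness of $W^{\omega}$ (and hence of the decomposition); this is routine and follows from viewing $W^{\omega}$ locally as the kernel of the smooth bundle map $f^{\ast}\op{T}\!M \to W^{\ast}$ given by $v \mapsto \omega(v, \cdot)|_W$, which has constant rank by Corollary \ref{corj2}. The rest is a direct application of the previously established results, so I do not expect any genuine obstacle beyond this verification.
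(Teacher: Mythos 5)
Your proposal is correct and follows the same route as the paper: combine Corollary \ref{corj} and Corollary \ref{corj2} to get the pointwise direct sum $(f^{\ast}\op{T}\!M)_x = W_x \oplus W_x^{\omega}$, and then pass to sections via the bundle-decomposition fact stated just before the corollary. The only difference is that you explicitly verify smoothness of the sub-bundles $W$ and $W^{\omega}$, which the paper leaves implicit; this is a harmless (and welcome) addition, not a change of approach.
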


\begin{proposition} \label{integcomp}
 Let $J$ be an almost complex structure on $M$, and $f \colon \Sigma \to M$ a $J$-holomorphic map. 
\begin{enumerate}
\item[{\rm (1)}] Assume that $J$ is $\omega$-tamed. Then  
for $\tau$ that is \emph{not} everywhere tangent to $f(\Sigma)$, $ {\omega_{\mS_{\op{i}}(\Sigma)}}_{f} ( \tau, \tilde{J}(\tau) ) > 0.$ 
\item[{\rm (2)}] Assume that $J$ is $\omega$-compatible. Then $\tilde{J}$ is compatible with   $ {\omega_{\mS_{\op{i}}(\Sigma)}}$.
\end{enumerate}
\end{proposition}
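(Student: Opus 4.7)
My plan is to prove both parts simultaneously by decomposing tangent vectors to $\mS_{\op{i}}(\Sigma)$ along the splitting from Corollary~\ref{corj3} and reducing everything to the pointwise formula for $\omega_{\mS_{\op{i}}(\Sigma)}$ extracted in Case~2 of the proof of Theorem~\ref{thclonon}.

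First, given $\tau,\,\tau_1,\,\tau_2 \in \op{T}_f(\mS_{\op{i}}(\Sigma))$, I write $\tau = \tau' + \tau''$ and $\tau_i = \tau_i' + \tau_i''$ using Corollary~\ref{corj3}, where the primed parts are everywhere tangent to $f(\Sigma)$ and the double-primed parts are pointwise symplectically orthogonal to $\op{d}\!f(\op{T}\Sigma)$. By Claim~\ref{jclaim}(1)--(2), $\tilde J$ preserves each of these two classes, so $\tilde J(\tau) = \tilde J(\tau') + \tilde J(\tau'')$ is the corresponding decomposition of $\tilde J(\tau)$. By the degeneracy statement in Theorem~\ref{thclonon}, $\omega_{\mS_{\op{i}}(\Sigma)}(\tau',\cdot) \equiv 0$; and for $\tau_i''$ pointwise symplectically orthogonal to $\op{d}\!f(\op{T}\Sigma)$, the wedge product $\omega(\tau_i'',\op{d}\!f(\cdot))$ is identically zero, so the second summand in the pointwise formula derived in Case~2 drops out, leaving
\begin{equation*}
{\omega_{\mS_{\op{i}}(\Sigma)}}_f(\tau_1'', \tau_2'') \;=\; \int_{\Sigma} \omega(\tau_1'', \tau_2'')\, f^{\ast}\omega,
\end{equation*}
up to a universal positive constant. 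Combining these observations (and using that $\tau_1''$ is pointwise sympl-orthogonal to $\tau_2'$ so the ``mixed'' terms also vanish), we obtain ${\omega_{\mS_{\op{i}}(\Sigma)}}_f(\tau_1, \tau_2) = \int_{\Sigma} \omega(\tau_1'', \tau_2'')\, f^{\ast}\omega$.

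For part (1), apply this formula with $\tau_1 = \tau$ and $\tau_2 = \tilde J(\tau)$; since $\tilde J$ preserves the decomposition, $\tau_2'' = \tilde J(\tau'')$, and
\begin{equation*}
{\omega_{\mS_{\op{i}}(\Sigma)}}_f(\tau, \tilde J(\tau)) \;=\; \int_{\Sigma} \omega(\tau''(x), J_{f(x)}\tau''(x))\, f^{\ast}\omega.
\end{equation*}
Because $J$ is $\omega$-tamed, the integrand is nonnegative pointwise, strictly positive wherever $\tau''(x) \neq 0$. Since $\tau$ is not everywhere tangent to $f(\Sigma)$, by the definition of the decomposition $\tau''$ is nonzero at some $x$, hence (by continuity) on an open set, and $f^{\ast}\omega$ is an area form on $\Sigma$, so the integral is strictly positive.

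For part (2), I must show $\tilde J$-invariance, ${\omega_{\mS_{\op{i}}(\Sigma)}}_f(\tilde J\tau_1, \tilde J\tau_2) = {\omega_{\mS_{\op{i}}(\Sigma)}}_f(\tau_1, \tau_2)$, in addition to the positivity from (1). Using the formula above for both sides and the fact that $\tilde J$ preserves the orthogonal component,
\begin{equation*}
{\omega_{\mS_{\op{i}}(\Sigma)}}_f(\tilde J\tau_1, \tilde J\tau_2) \;=\; \int_{\Sigma} \omega(J\tau_1'', J\tau_2'')\, f^{\ast}\omega \;=\; \int_{\Sigma} \omega(\tau_1'', \tau_2'')\, f^{\ast}\omega,
\end{equation*}
where the last equality uses the $\omega$-compatibility of $J$ pointwise on $M$. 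Together with part (1), this establishes compatibility of $\tilde J$ with ${\omega_{\mS_{\op{i}}(\Sigma)}}$ (understood, as throughout, up to the tangential kernel). The only real content of the argument is assembling the pieces; there is no substantial obstacle, since Corollary~\ref{corj3} and the Case~2 formula do all the heavy lifting, and everything reduces to the familiar pointwise taming/compatibility properties of $J$ on $M$.
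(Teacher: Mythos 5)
Your argument follows the paper's proof essentially step for step: decompose via Corollary~\ref{corj3}, kill the tangential parts using the degeneracy statement of Theorem~\ref{thclonon} together with Claim~\ref{jclaim}(1), and evaluate what remains with the pointwise formula extracted in Case~2, so that tamedness gives positivity and compatibility gives invariance. The one place where your justification outruns your hypotheses is in part~(1): you invoke Claim~\ref{jclaim}(2) to conclude that $\tilde J$ preserves the symplectically-orthogonal class, and hence that $(\tilde J\tau)''=\tilde J(\tau'')$; but Claim~\ref{jclaim}(2) assumes $J$ is $\omega$-compatible, whereas part~(1) only assumes $J$ is $\omega$-tamed, and under tamedness alone $\tilde J(\tau'')$ may acquire a nonzero tangential component. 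The gap is harmless and closes in one line: by Theorem~\ref{thclonon} and Claim~\ref{jclaim}(1) alone you already have ${\omega_{\mS_{\op{i}}(\Sigma)}}_f(\tau,\tilde J(\tau))={\omega_{\mS_{\op{i}}(\Sigma)}}_f(\tau'',\tilde J(\tau''))$ without needing to project $\tilde J(\tau'')$, and in the Case~2 pointwise formula the cross term $2\,\omega(\tau'',\op{d}\!f(\cdot))\wedge\omega(\tilde J(\tau''),\op{d}\!f(\cdot))$ vanishes because its \emph{first} factor is identically zero on $\op{d}\!f(\op{T}\Sigma)$ --- no hypothesis on $\tilde J(\tau'')$ is required. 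This is precisely why the paper cites only part~(1) of Claim~\ref{jclaim} in the tamed case and reserves part~(2) for the compatibility statement. Everything else, including your care about strict positivity (nonnegative integrand, positive on an open set, $f^{\ast}\omega$ an area form), is correct and, if anything, slightly more explicit than the paper's write-up.
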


\begin{proof}
\begin{enumerate}
\item[(1)] By Corollary \ref{corj3}, part (1) of Claim \ref{jclaim}, and Theorem \ref{thclonon}, it is enough to show that $ {\omega_{\mS_{\op{i}}(\Sigma)}}_{f} (\phi, \tilde{J}(\phi)) \neq 0$ for $\phi$ such that  $\phi(x)$ is symplectically orthogonal to  $\op{d}\!f_{x}( \op{T}_{x}\Sigma )$ at every $x \in \Sigma$. Indeed, for such $\phi$,
for $z_1,\,z_2 \in \op{T}(\{f\} \times \Sigma)$,
  \begin{eqnarray}
&&  \iota_{(\phi,0) \wedge (\tilde{J}(\phi),0)} (\op{ev}^*(\omega \wedge \omega))(z_1, z_2)  \nonumber \\
  &=&\Big(\iota_{(\op{d}(\op{ev})(\phi,0) \wedge \op{d}(\op{ev})(\tilde{J}(\phi),0))}(\omega \wedge \omega)\Big)
(\op{d}(\op{ev})(z_1),\, \op{d}(\op{ev})(z_2)) \nonumber \\
&=&\Big(\iota_{\phi \wedge \tilde{J}(\phi)}(\omega \wedge \omega)\Big)(\op{d}\!f(z_1),\, \op{d}\!f(z_2)) \nonumber \\
&=& 2 \, \omega(\phi,\, \tilde{J}(\phi))\, \omega(\op{d}\!f(z_1),\, \op{d}\!f(z_2)) + 2 \omega (\phi, \, \op{d}\!f(\cdot)) \wedge \omega(\tilde{J}(\phi), \, \op{d}\!f(\cdot))(z_1,z_2). \nonumber
  \end{eqnarray}
  
  By assumption on $\phi$, the second summand in the last term equals zero, so $$\int_{\{f\} \times \Sigma}  \iota_{( (\phi,0) \wedge (\tilde{J}(\phi),0))} \op{ev}^{\ast} ( \omega \wedge \omega )=
  \int_{\Sigma} \omega(\phi,\tilde{J}(\phi)) f^{\ast}\omega \neq 0,$$
  where the last inequality follows from the fact that $J$ is $\omega$-tamed and $f^{\ast}\omega$ is a symplectic $2$-form on a surface, hence an area-form.

\item[(2)] 
By Corollary \ref{corj3}, part (1) of Claim \ref{jclaim}, and Theorem \ref{thclonon}, it is enough to show that 
$$ {\omega_{\mS_{\op{i}}(\Sigma)}}_{f} (\phi_1, \phi_2) =  {\omega_{\mS_{\op{i}}(\Sigma)}}_{f} (\tilde{J}(\phi_1),\tilde{J}(\phi_2))$$
for $\phi_1, \, \phi_2$ such that  $\phi_{i}(x)$ is symplectically orthogonal to  $\op{d}\!f_{x}( \op{T}_{x}\Sigma )$ at every $x \in \Sigma$.
Indeed the above calculation shows that for such $\phi_i$,  
 \begin{eqnarray}
&& \iota_{(\tilde{J}(\phi_1),0) \wedge (\tilde{J}(\phi_2)),0)} (\op{ev}^*(\omega \wedge \omega))(z_1, z_2) \nonumber \\
 &=& 2 \, \omega(\tilde{J}(\phi_1),\, \tilde{J}(\phi_2))\, \omega(\op{d}\!f(z_1),\, \op{d}\!f(z_2)) + 2 \omega (\tilde{J}(\phi_1), \, \op{d}\!f(\cdot)) \wedge \omega(\tilde{J}(\phi_2), \, \op{d}\!f(\cdot))(z_1,z_2). \nonumber
 \end{eqnarray}
 Since $J$ is $\omega$-compatible, the first summand in the last term equals  $2 \, \omega(\phi_1,\, \phi_2)\, \omega(\op{d}\!f(z_1),\, \op{d}\!f(z_2))$.
 By assumption on $\phi_i$, and since $J$ is $\omega$-compatible, we deduce by part (2) of Claim \ref{jclaim} that  $\tilde{J}(\phi_{i})(x)$ is symplectically orthogonal to  $\op{d}\!f_{x}( \op{T}_{x}\Sigma )$ at every $x \in \Sigma$, therefore the second summand in the last term equals zero. Thus  
\begin{eqnarray}
 \iota_{(\tilde{J}(\phi_1),0) \wedge (\tilde{J}(\phi_2)),0)} (\op{ev}^*(\omega \wedge \omega))(z_1, z_2) 
 &=& 2 \, \omega(\phi_1,\, \phi_2)\, \omega(\op{d}\!f(z_1),\, \op{d}\!f(z_2)) \nonumber \\
 &=& \iota_{(\phi_1,0) \wedge (\phi_2,0)} (\op{ev}^*(\omega \wedge \omega))(z_1, z_2).  \nonumber
 \end{eqnarray}
 (The last equality follows again from the assumption on $\phi_i$.)

\end{enumerate}

\end{proof}


\section{Moduli spaces of $J$\--holomorphic curves} \label{mod}
\label{2.2}

We review here the ingredients of the theory of J-holomorphic curves
that
we need in this study,  as we learned from  \cite{gromovcurves} and \cite{MS2}, and deduce a few facts that we use later in the paper.

Let $(M,\,J)$ be an almost complex $2n$\--manifold. Fix a compact  Riemann
surface $\Sigma=(\Sigma,\,j)$. 

\subsection*{Simple $J$--holomorphic curves}

 We say that a $J$-holomorphic curve is 
 \emph{simple} if it is not the composite of a holomorphic branched covering map
  $(\Sigma,\,j) \to (\Sigma',\,j')$ of degree greater than one with a $J$-holomorphic map 
  $(\Sigma',\,j') \to (M,\,J)$.

\subsection*{The operators ${  \bar{\partial}_J}$ and ${  \op{D}_f}$}

The $J$\--holomorphic maps from
$(\Sigma,\,j)$ to $(M,\,J)$ are the maps satisfying $\bar{\partial}_{J}(f)=0$,
where $$\bar{\partial}_J(f) := \frac{1}{2}\, (\op{d}\!f + J \circ \op{d}\!f \circ
j).$$ Let $A \in
\op{H}_{2}(M,\,\Z)$ be a homology class. The $\bar{\partial}_J$ operator
defines a section $S \colon \fB \to \E$,
\begin{eqnarray} \label{SBE}
S(f):=(f,\,\bar{\partial}_J(f)),
\end{eqnarray}
where $\fB \subset \op{C}^{\infty}(\Sigma,\, M)$
denotes the space of all smooth maps $f \colon \Sigma \to M$ that represent
the homology class $A$, and the bundle $\E \to \fB$ is the infinite
dimensional vector bundle whose fiber at $f$ is the space 
$\E_f: =\Omega^{0,\,1}(\Sigma,\, f^{*}(\op{T}\!M))$
of smooth $J$\--antilinear 1\--forms on
$\Sigma$ with values in $f^{*}(\op{T}\!M)$.

Given a $J$-holomorphic map $f \colon (\Sigma,j) \to (M,J)$ in the class $A$, the \emph{vertical differentiation} of the section $S$ at $f$
\begin{eqnarray} \label{vertdif:def}
\op{D}_f \colon \Omega^0(\Sigma,\, f^*(\op{T}\!M)) \to \Omega^{0,1}(\Sigma,
f^*(\op{T}\!M)) \nonumber
\end{eqnarray}
is the composition of the differential
$
\op{D}\!S(f) \colon \op{T}_f\mathcal{B}
\to \op{T}_{(f,\,0)}\mathcal{E}
$
with the projection map
$\pi_f \colon \op{T}_{(f,\,0)}\mathcal{E} = \op{T}_{f} \mathcal{B} \oplus
\mathcal{E}_f \to \mathcal{E}_f.$

 Let $(M,\omega)$ be  a symplectic manifold and $J$ an $\omega$-compatible almost complex structure on $M$. 
 The operator
 $\op{D}_f$  can be expressed 
as a sum
\begin{eqnarray} \label{Dbarf}
\op{D}_f(\xi)=\op{D}^f_1(\xi)+\op{D}^f_2(\xi) \nonumber,
\end{eqnarray}
where  $\op{D}^f_1(\xi)$ is complex linear (meaning that
$\op{D}^f_1(J\xi)=J\, \op{D}^f_1(\xi)$)
and 
\begin{eqnarray} \label{Dbarf2}
\op{D}^f_2(\xi)=\frac{1}{4}\, \op{N}_J(\xi,\, \partial_J(f)).
\end{eqnarray}
is complex antilinear (meaning that
$\op{D}^f_2(J\xi)=-J\,\op{D}^f_2(\xi)$).
See \cite[Remark 3.1.2]{MS2}.
Recall that the Nijenhuis tensor $\op{N}_J$ is defined by
\begin{eqnarray} \label{njtensor}
\op{N}_J(X,\,Y):=[JX,\,JY]-J\,[JX,\,Y]-J[X,\,JY]-[X,\,Y] \nonumber
\end{eqnarray}
where $X,\,Y\colon M \to \op{T}\!M$ are vector fields on $M$.
It is a result of Newlander\--Nirenberg that $J$ is integrable
if and only if $\op{N}_J = 0$, c.f. \cite{NN} or \cite[Thm. 4.12]{MS}.

\begin{lemma} \label{jdf}
If $\tau \in \Omega^0(\Sigma, \, f^*(\op{T}M))$ is such that
$\op{D}_f(\tau)=0$, then
\begin{eqnarray} \label{Df2tau}
\op{D}_f(J \, \tau)=-2J\,\op{D}^f_2(\tau).
\end{eqnarray}
\end{lemma}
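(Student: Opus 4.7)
The plan is to exploit the complex linear/antilinear decomposition $\op{D}_f = \op{D}_1^f + \op{D}_2^f$ given in the paragraph preceding the lemma, together with the vanishing hypothesis $\op{D}_f(\tau)=0$.

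First I would apply $\op{D}_f$ to $J\tau$ and split it using the decomposition:
\begin{equation}
\op{D}_f(J\tau) = \op{D}_1^f(J\tau) + \op{D}_2^f(J\tau). \nonumber
\end{equation}
Using that $\op{D}_1^f$ is complex linear ($\op{D}_1^f(J\tau)=J\,\op{D}_1^f(\tau)$) and that $\op{D}_2^f$ is complex antilinear ($\op{D}_2^f(J\tau)=-J\,\op{D}_2^f(\tau)$), this becomes
\begin{equation}
\op{D}_f(J\tau) = J\,\op{D}_1^f(\tau) - J\,\op{D}_2^f(\tau). \nonumber
\end{equation}

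Next I would invoke the hypothesis $\op{D}_f(\tau)=0$, which reads $\op{D}_1^f(\tau)+\op{D}_2^f(\tau)=0$, equivalently $\op{D}_1^f(\tau)=-\op{D}_2^f(\tau)$. Substituting this identity into the previous line yields
\begin{equation}
\op{D}_f(J\tau) = -J\,\op{D}_2^f(\tau) - J\,\op{D}_2^f(\tau) = -2J\,\op{D}_2^f(\tau), \nonumber
\end{equation}
which is the desired equality \eqref{Df2tau}.

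There is essentially no obstacle here: the lemma is a purely formal consequence of the type decomposition of $\op{D}_f$ into its $\C$-linear and $\C$-antilinear parts combined with the holomorphicity assumption on $\tau$. The only thing to be careful about is the sign conventions — in particular, remembering that $\op{D}_2^f$ is antilinear rather than linear, so that applying $J$ on the inside introduces a minus sign, which is precisely what produces the factor $-2$ rather than $0$ in the final answer.
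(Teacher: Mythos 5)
Your proof is correct and follows exactly the same route as the paper: split $\op{D}_f(J\tau)$ via the decomposition into the complex linear part $\op{D}_1^f$ and antilinear part $\op{D}_2^f$, then substitute $\op{D}_1^f(\tau)=-\op{D}_2^f(\tau)$ from the hypothesis. Nothing is missing.
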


\begin{proof}
If $\op{D}_f(\tau)=0$, then
$\op{D}^f_1(\tau)=-\op{D}^f_2(\tau)$, and therefore
we have that
\begin{eqnarray}
\op{D}_f(J \, \tau)=J\,\op{D}^f_1(\tau)-J\,\op{D}^f_2(\tau)=
-J\,\op{D}^f_2(\tau)-J\,\op{D}^f_2(\tau)=-2J\,\op{D}^f_2(\tau).
\nonumber
\end{eqnarray}
\end{proof}

\subsection*{Moduli spaces of simple $J$\--holomorphic curves}

\begin{definition}
The \emph{moduli space of $J$\--holomorphic maps from $(\Sigma,\, j)$ to $(M,\,J)$} in the class $A$
is the set
$
\M(A, \, \Sigma,\,J)
$
given as the
intersection of the zero set of the section $S$ in (\ref{SBE}), and the open set of 
all simple
 maps
$\Sigma \to M$ which represent the class $A$. 
\end{definition}

Explicitly,
\begin{eqnarray} \label{moduli:def1}
\M(A,\ \Sigma,\,J):=
\{f \in \op{C}^{\infty}(\Sigma,\, M) \ \mid \ f
\textup{ is a simple}\,\, (j,\,J)\textup{\--holomorphic map in }A\}. \nonumber
\end{eqnarray}

The tangent space to $
\M(A, \, \Sigma,\,J)
$ at $f$ is the zero set of $\op{D}_f$.

\begin{lemma} \label{jinteg}
If $J$ is an integrable almost complex structure on $M$, and $f \colon \Sigma \to M$ is a $J$-holomorphic map, then for $\tau \in T_{f}\M(A, \,\Sigma,\,J)$, the vector $J \circ \tau$ is also in  $T_{f}\M(A, \,\Sigma,\,J)$.
\end{lemma}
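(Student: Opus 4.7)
The plan is to combine Lemma \ref{jdf} with the Newlander--Nirenberg characterization of integrability, which was just recalled in the paper. Recall that the tangent space $\op{T}_f\M(A,\Sigma,J)$ is exactly the kernel of the linearized operator $\op{D}_f$. So what has to be shown, given $\tau \in \ker \op{D}_f$, is that $J \circ \tau$ again lies in $\ker \op{D}_f$.

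First, I would invoke Lemma \ref{jdf}: because $\op{D}_f(\tau) = 0$, we have the identity
\begin{eqnarray}
\op{D}_f(J\,\tau) = -2J\,\op{D}^f_2(\tau). \nonumber
\end{eqnarray}
Thus it suffices to verify that the antilinear piece $\op{D}^f_2(\tau)$ vanishes. By formula \eqref{Dbarf2}, $\op{D}^f_2(\tau) = \tfrac{1}{4}\op{N}_J(\tau,\, \partial_J(f))$, so this reduces to showing that the Nijenhuis tensor contribution is zero.

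At this point the integrability hypothesis is used: by the Newlander--Nirenberg theorem cited just above the lemma, $J$ integrable implies $\op{N}_J \equiv 0$ on $M$. Consequently $\op{D}^f_2 \equiv 0$ as an operator on $\Omega^0(\Sigma, f^*(\op{T}\!M))$, so $\op{D}^f_2(\tau) = 0$, and substituting back gives $\op{D}_f(J\tau) = 0$, which is exactly the statement that $J \circ \tau \in \op{T}_f\M(A,\Sigma,J)$.

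There is essentially no obstacle here; the proof is a two-line chase once Lemma \ref{jdf} and Newlander--Nirenberg are in place. The only minor point worth flagging is that $J\tau$ is automatically a smooth section of $f^*(\op{T}\!M)$ (since $J$ is smooth), so it genuinely lies in the ambient space $\Omega^0(\Sigma, f^*(\op{T}\!M))$ where the tangent space to $\M(A,\Sigma,J)$ at $f$ sits, and no homology-class condition is in play because we are working at the level of tangent vectors.
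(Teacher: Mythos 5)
Your argument is exactly the paper's: the authors also observe that integrability gives $\op{N}_J=0$ via Newlander--Nirenberg, hence $\op{D}^f_2=0$ by \eqref{Dbarf2}, and then conclude $\op{D}_f(J\circ\tau)=0$ from \eqref{Df2tau}. The proposal is correct and matches the paper's proof in both substance and structure.
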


 \begin{proof}
Since $J$ is integrable $\op{N}_J=0$, hence by (\ref{Dbarf2}) and
(\ref{Df2tau}),
${\op{D}}_f ( J \circ \tau )= 0$.
\end{proof}

\subsection*{The universal moduli space}

Let $\J:=\J(M,\omega)$. 
\begin{definition}
The \emph{universal moduli space of holomorphic maps} from
$\Sigma$ to $M$, in the class  $A$, is the set
\begin{eqnarray} \label{moduli:def2}
\M(A,\,\Sigma,\,\J):=\{(f,\,J) \ \mid \ J
\in \J, \ f \in \M(A,\ \Sigma,\,J) \}. \nonumber
\end{eqnarray}
\end{definition}

\subsection*{Regular value and regular path}

We have the following consequences of the 
Sard-Smale theorem,
the infinite dimensional inverse mapping theorem,
and the ellipticity of the Cauchy-Riemann equations.

\begin{lemma} \label{consard}
\begin{enumerate}
\item[{\rm (a)}]
The universal moduli space $\M(A,\,\Sigma,\,\J)$ and the space $\J$ 
are Fr\'echet manifolds.

\item[{\rm (b)}]

Consider the projection map 
$$p_{A} \colon \M(A,\,\Sigma, \, \mathcal{J}) \to \J$$
for any $J$ in the set of $\omega$-compatible $J$'s that are regular for the map 
$p_A$. The moduli space
$\M(A, \, \Sigma,\,J)$
is a smooth manifold of dimension $2\op{c}_1(A)+n(2-2g)$

\item[{\rm (c)}]
The set   of $\omega$-compatible $J$'s that are regular for the map 
$p_A$ is of the second category in~$\J$.

\item[{\rm (d)}]
If $(u,\,J)$ is a regular value for $p_A$, then for any neighborhood $U$ 
of $(u,\,J)$ in $\M(A,\Sigma, \, \J)$, its image, $p_A(U)$, contains a neighborhood 
of $J$ in $\J$.

\item[{\rm (e)}]
Let $J_0, \, J_1 \in  \J$, assume that $J_0, \, J_1$ are regular for $p_A$. If a path $\lambda \to J_{\lambda}$ is transversal to $p_A$, then 
$$\W(A,\, \Sigma,\, \{J_{\lambda}\}_{\lambda}):=\{(\lambda,f) \, | \, 0 \leq \lambda \leq 1, \, f \in \M(A,\, \Sigma,\, 
J_{\lambda})\}$$
is a smooth oriented manifold with boundary $\partial \W(A,\, \Sigma,\,\{J_{\lambda}\}_{\lambda})=
\M(A,\, \Sigma,\, J_0) \cup \M(A,\, \Sigma,\, J_1).$
The boundary orientation agrees with the orientation of $\M(A,\,\Sigma,\, J_1)$ and is opposite to the orientation of $\M(A,\, \Sigma,\, J_0)$.

\item[{\rm (f)}]
Let $\{ J_t \}_{t \in [0,1]}$
be a 
$\op{C}^1$ simple path in $\J$ whose endpoints are regular values
for $p_A$.  Then there exists a 
$C^1$ 
perturbation
$\{ \tilde{J}_t \}$ of $J_t$ with the same endpoints
which is transversal to $p_A$.

\end{enumerate}
\end{lemma}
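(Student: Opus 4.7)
The plan is to reduce each of the six statements to standard results in the theory of $J$-holomorphic curves as developed in \cite{MS2}, by passing to Banach completions and invoking elliptic regularity at the end. Concretely, I would replace $\fB$ and $\E$ by Sobolev completions $\fB^{k,p}$ and $\E^{k-1,p}$ with $kp>2$, on which the section $S(f)=(f,\bar{\partial}_J f)$ becomes a smooth section of a Banach vector bundle whose vertical linearization $\op{D}_f$ is a real-linear Fredholm operator. The Riemann--Roch formula identifies its index with $2\op{c}_1(A)+n(2-2g)$. Elliptic regularity for the Cauchy--Riemann equations ensures that $W^{k,p}$-solutions of $\bar{\partial}_J f=0$ are in fact smooth, so each manifold statement proved in the Banach setting descends to the smooth one.

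For (a), I would combine the above with the observation that $\J$ is a (contractible) open subset of the Fr\'echet space of smooth sections of an appropriate endomorphism bundle of $\op{T}\!M$ satisfying the $\omega$-compatibility constraint, hence a Fr\'echet manifold. The universal moduli space $\M(A,\Sigma,\J)$ is then a Fr\'echet manifold via the implicit function theorem applied to the universal section $(f,J)\mapsto \bar{\partial}_J f$, whose vertical differential is surjective on the simple locus. Parts (b) and (c) follow by applying the Sard--Smale theorem to the smooth Fredholm map $p_A$: its regular values form a residual (hence second-category) subset of $\J$, and for any such $J$ the fiber $\M(A,\Sigma,J)$ is a smooth manifold of dimension equal to the Fredholm index. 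Part (d) is then the open mapping property for a submersion: at a regular value $(u,J)$ the differential $\op{d}p_A$ is surjective, so $p_A$ is locally a submersion and thus open near $(u,J)$.

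For (e), I would invoke the usual implicit function argument for transverse paths: if $\lambda\mapsto J_\lambda$ is transverse to $p_A$, then $\W(A,\Sigma,\{J_\lambda\}_\lambda)$ is cut out transversally and inherits a smooth manifold-with-boundary structure whose boundary is precisely the disjoint union of the fibers over $\lambda=0,1$, with the induced orientation given by the Fredholm determinant convention of \cite[Chapter 3]{MS2} (the sign flip at $\lambda=0$ reflects the outward-normal orientation on the boundary). For (f), a parametric Sard--Smale argument on the Banach manifold of $C^1$-paths in $\J$ with fixed endpoints shows that paths transverse to $p_A$ are $C^1$-dense; thus the given path $\{J_t\}$ can be perturbed within its endpoint-fixing $C^1$-neighborhood to a transverse $\{\tilde J_t\}$.

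The principal difficulty concealed in this outline is the \emph{universal transversality} needed for (a)--(c), i.e.\ the surjectivity of the vertical differential of the universal section along simple curves. This is where the simpleness hypothesis enters essentially: one has to produce an injective point $z_0\in\Sigma$ at which $\op{d}\!f$ is injective and $f^{-1}(f(z_0))=\{z_0\}$, and then show that compactly supported variations of $J$ near $f(z_0)$ already cover every element of the cokernel of $\op{D}_f$. Everything else in (b)--(f) is a formal consequence of Fredholm theory and Sard--Smale once this step is in place; accordingly my write-up would attribute (a)--(c) to the relevant theorems in \cite{MS2} and give the short topological/Fredholm arguments for (d)--(f) in detail.
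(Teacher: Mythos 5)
Your proposal is correct and follows essentially the same route as the paper, which itself gives no independent argument but simply cites \cite[Theorems 3.1.5 and 3.1.7]{MS2} and \cite{KKP}, \cite{KKP0} for these standard facts; your sketch (Sobolev completions, Fredholm index via Riemann--Roch, Sard--Smale, elliptic regularity, and universal transversality at an injective point of a simple curve) is precisely the content of those cited results. Nothing further is needed.
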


For items (a), (d) and (f) see, e.g., \cite{KKP} 
and \cite{KKP0} where they are derived
using the results of~\cite[chapter 3]{MS2}. For items (b) and (c) see \cite[Theorem 3.1.5]{MS2}. For item (e) see \cite[Theorem 3.1.7]{MS2}.


\section{Symplectic structure on moduli spaces of simple immersed $J$\--holomorphic curves} \label{modim}

Consider the moduli subspaces of 
$\mathcal{S}_{\op{i}}(\Sigma)$:
\begin{eqnarray}
\mathcal{M}_{\op{i}}(A,\, \Sigma,\,J):=\mathcal{M}(A,\, \Sigma,\,J) \cap \mathcal{S}_{\op{i}}(\Sigma)
 \,\,\,\,\,\textup{and}\,\,\,\,\,
\mathcal{M}_{\op{i}}(A,\, \Sigma,\,\mathcal{J}):=
\mathcal{M}(A,\, \Sigma,\,\mathcal{J}) \cap \mathcal{S}_{\op{i}}(\Sigma). \nonumber
\end{eqnarray}

\begin{remark}
For $J \in \J$, 
if $f$ is a $J$\--holomorphic curve and an immersion then 
$f^{*}\omega$ is a symplectic form. 
Closedness is immediate since $\omega$ is closed
and the differential commutes with the pullback.
The fact that  $f^{*}\omega$ is not degenerate is by 
the following argument. For $u \neq 0$, 
$$
f^{*}\omega(u,\,j(u))=\omega(\op{d}\!f(u), \,
\op{d}\!f(j(u)))=\omega(\op{d}\!f(u),\,J\op{d}\!f(u)) >0, 
$$
the last strict inequality is since 
$J$ is $\omega$\--compatible and $\op{d}\!f(u) \neq 0$ (since 
$u \neq 0$ and $\op{d}\!f$ is an immersion). 
Thus $\mathcal{M}_{\op{i}}(A,\, \Sigma,\, \J)$ is the subset of immersions in $\mathcal{M}(A,\, \Sigma,\,\J)$. 
\end{remark}

\begin{lemma} \label{mifr}
The moduli space
$\mathcal{M}_{\op{i}}(A,\, \Sigma,\,\mathcal{J})$ is a Fr\'echet submanifold of 
the F\'rechet manifold $\mathcal{M}(A,\, \Sigma,\, \mathcal{J})$.

If $J \in \J$ is regular for ${p_A}|_{\mathcal{M}_{\op{i}}(A,\, \Sigma,\,\mathcal{J})}$, the moduli space
$\mathcal{M}_{\op{i}}(A,\, \Sigma,\,J)$ is a finite dimensional manifold.
Moreover, the statements of Lemma \ref{consard}
hold true for 
$\mathcal{M}_{\op{i}}(A,\, \Sigma,\J)$, $\mathcal{M}_{\op{i}}(A,\, \Sigma,\,J)$, and  ${p_A}|_{\mathcal{M}_{\op{i}}(A,\, \Sigma,\,\mathcal{J})}$.

\end{lemma}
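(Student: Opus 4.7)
The plan is to deduce every assertion of Lemma \ref{mifr} from Lemma \ref{consard} by exploiting the openness of $\mathcal{S}_{\op{i}}(\Sigma) \subset \op{C}^{\infty}(\Sigma,\,M)$. First I would invoke the Remark preceding the lemma: for $J \in \mathcal{J}$ and a simple $J$-holomorphic map $f$, the condition $f \in \mathcal{S}_{\op{i}}(\Sigma)$ is equivalent to $f$ being an immersion. Since being an immersion is an open condition in the $\op{C}^{1}$ (and \emph{a fortiori} in the Fr\'echet) topology on $\op{C}^{\infty}(\Sigma,\,M)$, and $\mathcal{S}_{\op{i}}(\Sigma)$ is open, we obtain
$$\mathcal{M}_{\op{i}}(A,\,\Sigma,\,\mathcal{J}) \;=\; \mathcal{M}(A,\,\Sigma,\,\mathcal{J}) \cap \bigl(\mathcal{S}_{\op{i}}(\Sigma)\times \mathcal{J}\bigr),$$
which is open in $\mathcal{M}(A,\,\Sigma,\,\mathcal{J})$; likewise $\mathcal{M}_{\op{i}}(A,\,\Sigma,\,J) = \mathcal{M}(A,\,\Sigma,\,J)\cap \mathcal{S}_{\op{i}}(\Sigma)$ is open in $\mathcal{M}(A,\,\Sigma,\,J)$ for each $J$. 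The Fr\'echet submanifold statement, and the fact that $\mathcal{M}_{\op{i}}(A,\,\Sigma,\,J)$ is a finite-dimensional manifold of dimension $2\op{c}_1(A)+n(2-2g)$ at a regular $J$, then follow by restricting the charts provided by Lemma \ref{consard}(a)(b) to these open subsets.

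Next I would transfer items (a)--(f) of Lemma \ref{consard} to the immersed setting. Items (a), (b), and (d) are immediate from the openness above and the fact that open subsets inherit smooth structures and open neighborhoods. For (c), any $p_A$-regular $J$ is automatically regular for the restriction $p_A|_{\mathcal{M}_{\op{i}}(A,\,\Sigma,\,\mathcal{J})}$: since $\mathcal{M}_{\op{i}}$ is open in $\mathcal{M}$, the tangent spaces coincide at points of $\mathcal{M}_{\op{i}}$, so surjectivity of $\op{d}p_A$ at every preimage in $\mathcal{M}$ implies surjectivity at every preimage in $\mathcal{M}_{\op{i}}$. Hence the set of regular values for the restricted map contains the second-category set from Lemma \ref{consard}(c), and is therefore itself of the second category. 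For (e), I would set
$$\mathcal{W}_{\op{i}}(A,\,\Sigma,\,\{J_{\lambda}\}_{\lambda}) \;:=\; \mathcal{W}(A,\,\Sigma,\,\{J_{\lambda}\}_{\lambda}) \cap \bigl([0,1]\times \mathcal{S}_{\op{i}}(\Sigma)\bigr),$$
which, as an open subset of the oriented cobordism supplied by Lemma \ref{consard}(e), is again a smooth oriented manifold with boundary $\mathcal{M}_{\op{i}}(A,\,\Sigma,\,J_0)\cup \mathcal{M}_{\op{i}}(A,\,\Sigma,\,J_1)$ carrying the induced boundary orientations. Item (f) is identical: any $\op{C}^{1}$-perturbation produced by Lemma \ref{consard}(f) is \emph{a fortiori} transverse to the restricted projection.

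The only point that requires any care is the precise form of the regularity hypothesis in the finite-dimensional assertion, since regularity of the restriction $p_A|_{\mathcal{M}_{\op{i}}(A,\,\Sigma,\,\mathcal{J})}$ at $J$ is in principle weaker than regularity of $p_A$ at $J$ (non-immersed preimages are ignored). However, the implicit-function-theorem argument for \cite[Theorem~3.1.5]{MS2} at a point $(f,\,J)$ with $f$ an immersion uses only the surjectivity of the vertical differential at that point, which is exactly what regularity of the restriction provides; hence the argument applies verbatim once restricted to the open set $\mathcal{S}_{\op{i}}(\Sigma)$. With this observation in place, no essentially new analytic input is needed, and the proof consists of spelling out openness in each of (a)--(f).
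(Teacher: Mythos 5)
Your proposal is correct and follows the same route the paper implicitly relies on (the lemma is stated without a written proof there): since $\mathcal{S}_{\op{i}}(\Sigma)$ is open in $\op{C}^{\infty}(\Sigma,\,M)$ and, by the preceding remark, $\mathcal{M}_{\op{i}}(A,\,\Sigma,\,\J)$ is exactly the set of immersed elements of $\mathcal{M}(A,\,\Sigma,\,\J)$, everything in Lemma \ref{consard} restricts to this open subset. Your additional observation about the regularity hypothesis for the restricted projection being the correct (and a priori weaker) one is a worthwhile clarification that the paper does not spell out.
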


\begin{corollary} \label{corcob}
Assume that $S$ is an open and path-connected subset of $\J$, and that $S \subset  \J_{\scriptop{reg}}(A)$. Then for every $J_0, \, J_1 \in S$ there is an 
oriented cobordism between the manifolds $\M_{\scriptop{i}}(A,\,\Sigma,\,J_0)$ and $\M_{\scriptop{i}}(A,\,\Sigma,\,J_1)$ that is the $p_A$-preimage of a $p_A$-transversal path from $J_0$ to $J_1$ in $\J$. 
\end{corollary}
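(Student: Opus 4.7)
The plan is to combine path\--connectedness of $S$ with the transversality perturbation statement of Lemma \ref{consard}(f) and the cobordism statement of Lemma \ref{consard}(e), both applied to the immersed moduli spaces via Lemma \ref{mifr}.

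First, since $S$ is path\--connected, choose any continuous path $\gamma \colon [0,1] \to S$ with $\gamma(0) = J_0$ and $\gamma(1) = J_1$. By standard density of smooth paths in continuous paths in a Fr\'echet manifold, and by openness of $S$ in $\J$, I may approximate $\gamma$ by a $C^1$ path with the same endpoints whose image still lies inside $S$ (the uniform distance of the approximation can be chosen smaller than the distance from the compact image $\gamma([0,1])$ to the complement of the open set $S$). After a further small perturbation I may also arrange that this $C^1$ path is simple, again keeping it inside $S$ since $S$ is open. Call this path $\{J_\lambda\}_{\lambda \in [0,1]}$.

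Next, apply the analogue for $\M_{\op{i}}(A,\,\Sigma,\,\J)$ of Lemma \ref{consard}(f), which is granted by Lemma \ref{mifr}. Since the endpoints $J_0,\,J_1$ lie in $S \subset \J_{\scriptop{reg}}(A)$, they are regular values for ${p_A}|_{\M_{\op{i}}(A,\,\Sigma,\,\J)}$. Hence there exists a $C^1$\--small perturbation $\{\tilde J_\lambda\}_{\lambda \in [0,1]}$ of $\{J_\lambda\}$, fixing the endpoints, that is transversal to ${p_A}|_{\M_{\op{i}}(A,\,\Sigma,\,\J)}$. Again, because $S$ is open and $\{J_\lambda\}$ is contained in $S$, a sufficiently $C^1$\--small perturbation keeps $\{\tilde J_\lambda\}$ inside $S$; in particular each $\tilde J_\lambda \in \J_{\scriptop{reg}}(A)$.

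Finally, apply the analogue for $\M_{\op{i}}$ of Lemma \ref{consard}(e), again via Lemma \ref{mifr}. Since $\{\tilde J_\lambda\}$ is transversal to ${p_A}|_{\M_{\op{i}}(A,\,\Sigma,\,\J)}$ with regular endpoints, the space
\[
\W_{\op{i}}(A,\,\Sigma,\,\{\tilde J_\lambda\}_\lambda) \;=\; \{(\lambda,\,f) \mid 0 \le \lambda \le 1,\, f \in \M_{\op{i}}(A,\,\Sigma,\,\tilde J_\lambda)\}
\]
is a smooth oriented manifold with boundary $\M_{\op{i}}(A,\,\Sigma,\,\tilde J_0) \cup \M_{\op{i}}(A,\,\Sigma,\,\tilde J_1) = \M_{\op{i}}(A,\,\Sigma,\,J_0) \cup \M_{\op{i}}(A,\,\Sigma,\,J_1)$, with the prescribed boundary orientations. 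This is precisely the oriented cobordism claimed.

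The only non\--routine point is making sure the transversal path actually lies in $S$, since Lemma \ref{consard} produces paths in $\J$ but gives no a priori control on where the perturbation sits. Openness of $S$ is exactly what makes this step automatic, so no real obstacle arises; the rest is simply a careful reference to Lemmas \ref{consard} and \ref{mifr}.
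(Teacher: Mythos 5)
Your proof is correct and follows exactly the route the paper intends: the paper states Corollary \ref{corcob} without an explicit proof, as an immediate consequence of Lemma \ref{mifr} together with parts (e) and (f) of Lemma \ref{consard}, combined with path\--connectedness and openness of $S$. Your write-up simply makes that chain of references explicit (including the harmless extra care about keeping the perturbed path inside $S$, which the literal statement does not even require since the transversal path is only asked to lie in $\J$).
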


\subsection*{The form restricted to the moduli space of simple immersed $J$-holomorphic curves is symplectic up to reparametrizations when $J$ is integrable or close to integrable on a path}

As a result of Lemma \ref{jinteg} and Proposition \ref{integcomp}, we get the following proposition.
\begin{proposition} \label{propintsym}
  If $J$ is an integrable almost complex structure on $M$ that is compatible
  with $\omega$ and regular for $A$, then $\omega_{\mS_{\op{i}}(\Sigma)}$ restricted to
  $\M_{\op{i}}(A,\,\Sigma,\,J) $ is symplectic up to reparametrizations.
\end{proposition}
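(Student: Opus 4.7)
The plan is to unpack \emph{symplectic up to reparametrizations} as two assertions: (i) the restricted form is closed, and (ii) at every $f \in \M_{\op{i}}(A,\Sigma,J)$, the kernel of the restricted form on $T_f\M_{\op{i}}(A,\Sigma,J)$ consists exactly of those $\tau$ that are everywhere tangent to $f(\Sigma)$, i.e.\ the infinitesimal reparametrizations. Together these are equivalent to saying the form descends to a (non-degenerate, closed) symplectic $2$-form on the quotient $\widetilde{\M}_{\op{i}}(A,\Sigma,J)$ by the $\Aut(\Sigma,j)$-action.

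Claim (i) is immediate: $\omega_{\mS_{\op{i}}(\Sigma)}$ is closed by Theorem \ref{thclonon}, and pulling back a closed form under the inclusion $\M_{\op{i}}(A,\Sigma,J)\hookrightarrow \mS_{\op{i}}(\Sigma)$ preserves closedness. The ``easy half'' of (ii) is also immediate from Theorem \ref{thclonon}: if $\tau \in T_f\M_{\op{i}}(A,\Sigma,J)$ is everywhere tangent to $f(\Sigma)$, then $\omega_{\mS_{\op{i}}(\Sigma)}(\tau,\cdot)$ vanishes on all of $T_f\mS_{\op{i}}(\Sigma)$, hence \emph{a fortiori} on the subspace $T_f\M_{\op{i}}(A,\Sigma,J)$.

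The main content is the reverse implication in (ii): given $\tau \in T_f\M_{\op{i}}(A,\Sigma,J)$ that is \emph{not} everywhere tangent, produce a test vector $\tau' \in T_f\M_{\op{i}}(A,\Sigma,J)$ with $\omega_{\mS_{\op{i}}(\Sigma)}(\tau,\tau') \neq 0$. The plan is to take $\tau' := \tilde{J}(\tau)$. This choice works for two reasons that are exactly the hypotheses of the proposition. First, because $J$ is integrable, Lemma \ref{jinteg} gives $\tilde{J}(\tau) \in T_f\M(A,\Sigma,J)$; since $\mS_{\op{i}}(\Sigma)$ is open in $\op{C}^{\infty}(\Sigma,M)$ and $\M_{\op{i}}(A,\Sigma,J) = \M(A,\Sigma,J)\cap \mS_{\op{i}}(\Sigma)$, the tangent space to $\M_{\op{i}}$ at $f$ coincides with that of $\M(A,\Sigma,J)$, so $\tilde{J}(\tau)$ indeed lies in $T_f\M_{\op{i}}(A,\Sigma,J)$. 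Second, because $J$ is $\omega$-compatible (in particular $\omega$-tamed), Proposition \ref{integcomp}(1) yields $\omega_{\mS_{\op{i}}(\Sigma)}(\tau,\tilde{J}(\tau)) > 0$.

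The conceptual obstacle — really, the reason integrability is hypothesized — is ensuring that $\tilde{J}$ preserves the tangent space to the moduli space, so that the taming argument of Proposition \ref{integcomp}(1) supplies a test vector \emph{inside} $T_f\M_{\op{i}}(A,\Sigma,J)$ rather than merely in the ambient $T_f\mS_{\op{i}}(\Sigma)$. For general $\omega$-compatible $J$ the Nijenhuis tensor obstructs $J\circ\tau$ from satisfying $\op{D}_f(J\circ\tau)=0$, which is why the statement is restricted to integrable $J$; the extension to nearby $J$ is the content of the subsequent Lemma \ref{open}.
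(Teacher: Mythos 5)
Your proposal is correct and follows the paper's own route exactly: the authors deduce the proposition precisely by combining Lemma \ref{jinteg} (integrability makes $\tilde{J}$ preserve $\ker \op{D}_f$, hence the tangent space to the moduli space) with Proposition \ref{integcomp} (tamedness gives $\omega_{\mS_{\op{i}}(\Sigma)}(\tau,\tilde{J}(\tau))>0$ for $\tau$ not everywhere tangent), together with Theorem \ref{thclonon} for closedness and the characterization of the degenerate directions. Your write-up simply makes explicit the steps the paper leaves as a one-line remark.
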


\begin{remark} \label{remregst}
For examples of regular integrable compatible almost complex structures, we look at K\"ahler manifolds whose automorphism groups act transitively. By \cite[Proposition 7.4.3]{MS2}, if $(M,\,\omega_0,\,J_0)$ is a compact K\"ahler manifold and $G$ is a Lie group that acts transitively on $M$ by holomorphic diffeomorphisms, then $J_0$ is regular for every $A \in \op{H}_2(M,\,\Z)$. This applies, e.g., when $M = \C P^n$, $\omega_0$ the Fubini-Study form, $J_0$ the standard complex structure on $\C P^n$, and $G$ is the automorphism group $\PSL(n+1)$, 
\end{remark}

\begin{remark}
Notice that if $J$ is not integrable, then $\op{T}_{f}\M_{\op{i}}(A,\,\Sigma,\,J) $ is not necessarily closed under $\tilde{J}$, so non-degeneracy is harder to witness.
\end{remark}

\begin{remark} \label{remquo}
If the compact Riemann surface $\Sigma=(\Sigma,j)$ is of genus $0$, its group of automorphisms $\Aut(\Sigma,j)$ is $\PSL(2,\,\C)$ and its action on   
 $\M(A,\,\Sigma,\,\J) $ is proper, assuming that $0 \neq A \in \op{H}_2(M,\,\Z)$ (see, e.g., Lemma  3.1 in \cite{thesis}). 
 If $\Sigma$ is of genus $1$, it is a torus ${\C}/(\Z+\alpha \Z)$,  where the imaginary part
of  $\alpha$  is nonzero;  the group  $\Aut(\Sigma,\,j)$ contains the torus
itself (as left translations). When the genus is $\geq 2$, the automorphism group is finite, by Hurwitz's automorphism Theorem. 
So the action of $\Aut(\Sigma,j)$ on  $\M_{\scriptop{i}}(A,\,\Sigma,\,J)$ is proper;
therefore if the form $\omega_{\mathcal{S}_{\op{i}}(\Sigma)}$ on $\M_{\op{i}}(A,\,\Sigma,\,J)$ is symplectic up to reparametrizations, it
descends to a symplectic $2$-form 
${\widetilde{\omega}}_{\mathcal{S}_{\op{i}}(\Sigma)}$ on the quotient space  ${\widetilde{\M}}_{\scriptop{i}}(A,\,\Sigma,\,J)$.
\end{remark}

\begin{noTitle} \label{nt1}
A class $A \in \op{H}_2(M,\, \Z)$ is \emph{$J$-indecomposable} if it does not split as a sum 
$A_1 + \ldots + A_k$ of classes all of which can be represented by non-constant $J$-holomorphic curves. 
The class $A$ is called \emph{indecomposable} if it is 
$J$-indecomposable for all $\omega$-compatible $J$. Notice that if $A$ cannot be written 
as a sum $A = A_1 + A_2$ where $A_i \in \op{H}_2(M,\, \Z)$
and $\int_{A_i}{\omega} > 0$,   then it is indecomposable.

Assume that $M$ is compact.
If $A$ is indecomposable, then Gromov's compactness theorem \cite[1.5.B.]{gromovcurves} implies that  
if $J_n$ converges in $\J$, then, modulo parametrizations,  every sequence $(f_n,J_n)$ in $\M(A,\, \Sigma , \, \J)$ has a $(C^{\infty}-)$convergent subsequence.
Therefore the map  $p_A \colon \M(A,\, \Sigma ,\, \J) /  \Aut(\Sigma) \to \J$ induced by $p_A$ is proper; in particular every quotient space ${\widetilde{\mathcal{M}}}(A,\,\Sigma,\, J)$ is compact.

Similarly, if $A$ is $J$-indecomposable for all $J$ in a set $S \subset \J$, then the map ${p_A}^{-1}S / \Aut(\Sigma) \to S$ induced by $p_A$ is proper; in particular, its image is closed in $S$.
\end{noTitle}

We can now prove Corollary \ref{vol}.

\begin{proof}[Proof of Corollary \ref{vol}]

By part (f) of Lemma \ref{consard},  a path in $\J$ between $J_0, \, J_1 \in S$ can be perturbed to a $p_A$-transversal path with the same endpoints. By part (e) of Lemma \ref{consard}, the $p_A$-preimage of the path,  
$\W(A,\Sigma,\{J_{\lambda}\}_{\lambda})$, is a smooth oriented manifold with boundary $\partial \W(A,\Sigma,\{J_{\lambda}\}_{\lambda})=\M(A,\Sigma,J_0) \cup \M(A,\Sigma,J_1),$
and the boundary orientation agrees with the orientation of $\M(A,\Sigma,J_1)$ and is opposite to the orientation of $\M(A,\Sigma,J_0)$.
By Gromov's compactness theorem the quotient $\widetilde{\W}(A,\Sigma,\{J_{\lambda}\}_{\lambda})$, (i.e., the preimage of the path under the map  $p_A \colon \M(A,\, \Sigma ,\, \J) /  \Aut(\Sigma) \to \J$ induced by $p_A$), is compact (see \S \ref{nt1}).  

Thus, using Theorem \ref{thclonon}, the integration of ${\wedge^{n} {\widetilde{\omega}}_{{\op{C}}^{\infty}(\Sigma,\, M)}}$ along ${\widetilde{\mathcal{M}}(A,\,\Sigma,\, J_i) }$ (for $i=0,1$) is well defined, and by Stokes' Theorem (and the fact that ${\widetilde{\omega}}_{{\op{C}}^{\infty}(\Sigma,\, M)}$ is closed), we get that  
\begin{eqnarray}
0&=&\int_{\widetilde{\W}(A,\,\Sigma,\, \{J_{\lambda}\}_{\lambda})}{\op{d}(\wedge^{n} {\widetilde{\omega}}_{{\op{C}}^{\infty}(\Sigma,\, M)})} 
=
\int_{\partial \widetilde{\W}(A,\,\Sigma,\, \{J_{\lambda}\}_{\lambda})}(\wedge^{n} {\widetilde{\omega}}_{{\op{C}}^{\infty}(\Sigma,\, M)}) \nonumber \\
&=&
\int_{\widetilde{\mathcal{M}}(A,\,\Sigma,\, J_0) }{\wedge^{n} {\widetilde{\omega}}_{{\op{C}}^{\infty}(\Sigma,\, M)}}-\int_{\widetilde{\mathcal{M}}(A,\,\Sigma,\, J_1) }{\wedge^{n} {\widetilde{\omega}}_{{\op{C}}^{\infty}(\Sigma,\, M)}}.\nonumber
\end{eqnarray}

If there is an integrable $J_* \in S$ such that ${\mathcal{M}_{\scriptop{i}}(A,\,\Sigma,\, J_*)} \neq \emptyset$, then, by  Proposition \ref{propintsym},  $\int_{{\widetilde{\mathcal{M}}}_{\scriptop{i}}(A,\,\Sigma,\, J_*)} {\wedge^{n} {\widetilde{\omega}}_{\mS_{\scriptop{i}}(\Sigma)}} \neq 0$ hence, by the above, $\int_{{\widetilde{\mathcal{M}}}(A,\,\Sigma,\, J)} {\wedge^{n} {\widetilde{\omega}}_{{\op{C}}^{\infty}(\Sigma,\, M)}} \neq 0$ for every $J \in S$.

\end{proof}

\begin{lemma} \label{open}
Let $(M,\,\omega)$ be a symplectic manifold, $A \in \op{H}_2(M,\,\Z)$, and $\Sigma=(\Sigma,j)$ a compact Riemann surface. 
Assume that $J_*$ is an integrable and $A$-regular $\omega$-compatible almost complex structure. 
Let $$\{J_{\lambda}\}_{0 \leq \lambda \leq 1},$$ with $J_0=J_{*}$, be a path in $\J=\J(M,\omega)$ that is $A$-regular, i.e., the path is transversal to $p_A$ and $J_{\lambda} \in  \J_{\scriptop{reg}}(A)$ for all $0 \leq \lambda \leq 1$. Assume that the map 
\begin{equation} \label{papr}
 {p_A}^{-1}( \{J_{\lambda}\}_{0 \leq \lambda \leq 1}) / \Aut(\Sigma,j)  \to \{J_{\lambda}\}_{0 \leq \lambda \leq 1} 
\end {equation}
that is induced from $p_A$ is proper.

Then there is an open neighbourhood $I$ of $J_{*}$ in  $\{J_{\lambda}\}_{0 \leq \lambda \leq 1}$, such
that for every $J \in I$, the restriction of the form
 $\omega_{\mS_{\op{i}}(\Sigma)}$ to the moduli space
  $\M_{\scriptop{i}}(A,\,\Sigma,\,J) $ is symplectic up to reparametrizations. 
\end{lemma}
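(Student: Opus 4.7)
The plan is to bootstrap from Proposition \ref{propintsym} via a continuity-plus-compactness argument on the smooth cobordism over the path. Since $J_* = J_0$ is integrable, $\omega$-compatible, and $A$-regular, Proposition \ref{propintsym} tells me that $\omega_{\mathcal{S}_{\op{i}}(\Sigma)}$ restricted to $\mathcal{M}_{\op{i}}(A,\Sigma,J_*)$ is symplectic up to reparametrizations, so passing to the quotient by the proper $\Aut(\Sigma,j)$-action (Remark \ref{remquo}) the induced form $\widetilde{\omega}_{\mathcal{S}_{\op{i}}(\Sigma)}$ is non-degenerate at every point of $\widetilde{\mathcal{M}}_{\op{i}}(A,\Sigma,J_*)$. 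This is the base case at $\lambda=0$.

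Next I would assemble the universal picture: combining Lemma \ref{mifr} with Lemma \ref{consard}(e), the intersection $\mathcal{W}_{\op{i}}:=\mathcal{W}(A,\Sigma,\{J_\lambda\}_\lambda)\cap \mathcal{S}_{\op{i}}(\Sigma)$ is a smooth cobordism fibered via a map $\pi$ over $[0,1]$ with fibers $\mathcal{M}_{\op{i}}(A,\Sigma,J_\lambda)$, all of the same dimension $2k$ by Lemma \ref{consard}(b). Quotienting by $\Aut(\Sigma,j)$ produces a smooth submersion $\widetilde{\pi}:\widetilde{\mathcal{W}}_{\op{i}}\to [0,1]$ together with a smooth family of fiberwise closed $2$-forms descended from $\widetilde{\omega}_{\mathcal{S}_{\op{i}}(\Sigma)}$. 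Fiberwise non-degeneracy at a point then becomes the non-vanishing of the fiberwise top wedge power $\widetilde{\omega}^{\wedge k}$, which is a pointwise open condition on $\widetilde{\mathcal{W}}_{\op{i}}$, and which holds on the whole of $\widetilde{\pi}^{-1}(0)=\widetilde{\mathcal{M}}_{\op{i}}(A,\Sigma,J_*)$ by the first step.

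The heart of the argument is upgrading the pointwise condition on the fiber over $0$ to a uniform condition on $\widetilde{\pi}^{-1}(I)$ for an open interval $I$ around $0$. I would argue by contradiction: if no such $I$ existed, there would be $\lambda_n\to 0$ and $z_n\in \widetilde{\mathcal{M}}_{\op{i}}(A,\Sigma,J_{\lambda_n})$ at which the fiberwise top power of $\widetilde{\omega}_{\mathcal{S}_{\op{i}}(\Sigma)}$ vanishes. The properness assumption \eqref{papr}, applied to the full quotient moduli cobordism, extracts a subsequential limit $z_n\to z_\infty\in\widetilde{\mathcal{M}}(A,\Sigma,J_*)$. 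If $z_\infty$ is itself an immersion, then $z_\infty\in\widetilde{\mathcal{M}}_{\op{i}}(A,\Sigma,J_*)$ and continuity of $\widetilde{\omega}^{\wedge k}$ forces degeneracy at $z_\infty$, contradicting the base case.

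The main obstacle is the residual possibility that $z_\infty$ is \emph{not} an immersion, so that degeneracy points in $\widetilde{\mathcal{W}}_{\op{i}}$ escape to the non-immersed locus in the closure of the full moduli cobordism. Handling this requires showing that such escape is impossible for our contradiction-producing sequence: using $p_A$-transversality of the path, the universal moduli $\mathcal{M}(A,\Sigma,\mathcal{J})$ is smooth near $(z_\infty,J_*)$, and the immersion condition cuts out an open subset; any sequence $z_n$ in $\widetilde{\mathcal{W}}_{\op{i}}$ converging in the full cobordism to an immersed limit must lie in $\widetilde{\mathcal{W}}_{\op{i}}$ eventually. Combined with the properness, this would reduce the contradictory scenario to the immersed case treated above, thereby producing the desired neighborhood $I$.
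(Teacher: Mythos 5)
Your overall strategy is the same as the paper's: establish non-degeneracy on the fiber over $J_*$ via Proposition \ref{propintsym}, view the moduli spaces over the path as the fibers of the cobordism supplied by part (e) of Lemma \ref{consard} together with Lemma \ref{mifr}, observe that fiberwise non-degeneracy is an open condition, and use the properness hypothesis to convert this into a statement over a neighbourhood $I$ of $J_*$ in the path. The paper phrases the last step as a finite-subcover argument (cover the compact fiber $\widetilde{\M}_{\scriptop{i}}(A,\Sigma,J_*)$ by open subsets of $\widetilde{\W}(A,\Sigma,\{J_\lambda\}_\lambda)$ on which the fiberwise form is non-degenerate, then use properness to find $I$ whose preimage lies inside them), whereas you phrase it as sequential compactness plus contradiction; these are equivalent, so that difference is cosmetic.

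The gap is in your treatment of the possibly non-immersed limit $z_\infty$. You correctly identify that properness of \eqref{papr} only produces a subsequential limit in the full quotient moduli space $\widetilde{\M}(A,\Sigma,J_*)$, and that your contradiction requires $z_\infty$ to be an immersion. But the argument you offer --- that the immersion condition cuts out an open subset, so any sequence converging to an immersed limit eventually lies in $\widetilde{\W}_{\scriptop{i}}$ --- runs in the wrong direction: openness guarantees that curves near an immersed curve are immersed, not that a limit of immersed curves is immersed, and the hypothesis of your implication (``converging to an immersed limit'') is precisely what needs to be proved. As written, the contradiction does not close. The paper's proof handles this by reading the properness hypothesis as making each quotient $\widetilde{\M}_{\scriptop{i}}(A,\Sigma,J_\lambda)$ and the quotient cobordism $\widetilde{\W}$ compact --- in effect taking the immersed locus to be closed in (and, in the intended applications of Section \ref{examples}, equal to, by the adjunction inequality) the full moduli space over the path. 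If you either add that as a hypothesis or restate the properness assumption for the immersed universal moduli space of Lemma \ref{mifr}, your sequence $z_n$ subconverges inside $\widetilde{\W}_{\scriptop{i}}$, the limit is immersed, and the rest of your argument goes through.
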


\begin{proof}
By part (e) of Lemma \ref{consard} and Lemma \ref{mifr}, the universal moduli space over the path, $${p_A}^{-1}(\{J_{\lambda}\})=\W(A,\Sigma,\{J_{\lambda}\}_{\lambda})$$ is a finite dimensional manifold; for each $0 \leq \lambda \leq 1$, since $J_\lambda \in  \J_{\scriptop{reg}}(A)$, the moduli space $\M_{\scriptop{i}}(A,\,\Sigma,J_{\lambda})$ is a finite dimensional manifold. Moreover, since \eqref{papr} is proper,  we get that 
the quotient $\widetilde{\W}(A,\Sigma,\{J_{\lambda}\}_{\lambda})$ is compact, and each of the quotient spaces ${\widetilde{\M}}_{\scriptop{i}}(A,\,\Sigma,\,J)$ is compact. By Proposition \ref{propintsym}, the  restriction of the form
 ${\widetilde{\omega}}_{\mathcal{S}_{\op{i}}(\Sigma)}$ to  ${\widetilde{\M}}_{\scriptop{i}}(A,\,\Sigma,\,J_{*})$ is non-degenerate.

Since non\--degeneracy is an open condition, for every $f \in {\widetilde{\M}}_{\scriptop{i}}(A,\,\Sigma,\,J_{*})$ there is an open neighbourhood  
$$f \in V_f  \subseteq \widetilde{\W}(A,\Sigma,\{J_{\lambda}\}_{\lambda}),$$ such that for all $(h,J) \in V_f$, for every 
$\tau_h \in \op{T}_{h}{\widetilde{\M}}_{\scriptop{i}}(A,\,\Sigma,\,J)$,  the map ${\widetilde{\omega}}_{\mS_{\op{i}}(\Sigma)}(\tau_h,\cdot) $ on  $\op{T}_{h}{\widetilde{\M}}_{\scriptop{i}}(A,\,\Sigma,\,J)$ is not equal to zero. ($V_f$  should be small enough such that $h$ and 
$ \op{T}_{h}{\widetilde{\M}}_{\scriptop{i}}(A,\,\Sigma,\,J)$ are close to $f$ and  to $\op{T}_{f}\M_{\scriptop{i}}(A,\,\Sigma,\,J_{*})$, respectively.)

Let $$U_f = V_f \cap {\widetilde{\M}}_{\scriptop{i}}(A,\,\Sigma,\,J_{*}).$$ 
Since ${\widetilde{\M}}_{\scriptop{i}}(A,\,\Sigma,\,J_{*})$ is compact, it is covered by finitely many $U_f$-s. Let $V$ be the intersection of the corresponding $V_f$-s, then $V$ is an open neighbourhood of ${\widetilde{\M}}_{\scriptop{i}}(A,\,\Sigma,J_{*})$ in  ${\widetilde{\W}}(A,\Sigma,\{J_{\lambda}\}_{\lambda})$.  By properness of the map (\ref{papr}), there is an open neighborhood $J_* \in I \subset \{J_{\lambda}\}_{0 \leq \lambda \leq 1}$, such that the preimage of $I$ under \eqref{papr} is contained in  $V.$
This completes the proof.

\end{proof}

\subsection*{Compatible almost complex structures on moduli spaces of simple immersed J-holomorphic curves}

We apply the following well known fact, c.f. \cite[Prop. 12.6]{AC}

\begin{proposition} \label{alm}
Let $(N, \, \omega)$ be a symplectic manifold, and $g$ a Riemannian
metric on $N$. Then  there exists a canonical almost complex structure
$J$ on $N$ which is $\omega$-compatible. The structure $J$ equals $\sqrt{A A^{*}}^{-1}A$ where 
$A \colon \op{T}\!N \to \op{T}\!N$ is such that $\omega(u,\,v)=g(Au,\,v)$. 
\end{proposition}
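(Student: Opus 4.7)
The plan is to realize $J$ as the orthogonal part of the polar decomposition of $A$ with respect to $g$. First I would define $A \colon \op{T}\!N \to \op{T}\!N$ pointwise by $\omega(u,v) = g(Au,v)$; non-degeneracy of $g$ makes this well-defined, and non-degeneracy of $\omega$ makes $A$ a bundle automorphism. Using that $g$ is symmetric and $\omega$ is skew, one gets $g(Au,v) = \omega(u,v) = -\omega(v,u) = -g(Av,u) = -g(u,Av)$, so $A^{*} = -A$ relative to $g$. Consequently $AA^{*} = -A^{2}$ is $g$-symmetric and positive definite (since $g(AA^{*}u,u) = g(A^{*}u,A^{*}u) = \|A^{*}u\|_g^2 > 0$ for $u \neq 0$), so its unique positive definite symmetric square root $P := \sqrt{AA^{*}}$ is well-defined and invertible, and I set $J := P^{-1}A$.

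Next I would check $J^{2} = -\op{Id}$. The key observation is that $P$ commutes with $A$: since $P^{2} = -A^{2}$ evidently commutes with $A$, and $P$ is obtained from $P^{2}$ by the continuous functional calculus (hence is a norm-limit of polynomials in $P^{2}$), $P$ commutes with $A$ as well. Therefore
\[
J^{2} = P^{-1}A\, P^{-1}A = P^{-2}A^{2} = (-A^{2})^{-1}A^{2} = -\op{Id},
\]
so $J$ is a genuine almost complex structure.

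For compatibility with $\omega$, I would first compute $AJ = A P^{-1}A = P^{-1}A^{2} = -P$ using that $P$ commutes with $A$. Then, using symmetry of $P$,
\[
\omega(Ju, Jv) = g(AJu, Jv) = -g(Pu, Jv) = -g(u, PJv) = -g(u, Av) = -\omega(v,u) = \omega(u,v),
\]
establishing the invariance part of $\omega$-compatibility. Finally,
\[
\omega(v, Jv) = g(Av, Jv) = g(Av, P^{-1}Av) > 0
\]
for $v \neq 0$, since $P^{-1}$ is $g$-symmetric and positive definite and $Av \neq 0$.

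The construction is entirely pointwise and algebraic, and smoothness of $J$ follows from smoothness of $A$ together with smoothness of the positive square root on the cone of positive definite symmetric endomorphisms (via the holomorphic functional calculus applied fiberwise). The main subtlety to be careful about is the commutation $[P,A] = 0$, which is what makes both $J^{2} = -\op{Id}$ and the compatibility identities drop out cleanly; without it the polar factor $J$ would not square to $-\op{Id}$.
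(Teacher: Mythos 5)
Your proof is correct and is exactly the standard polar-decomposition argument that the paper relies on: the paper gives no proof of Proposition \ref{alm}, merely citing \cite[Prop.~12.6]{AC}, and your argument (skew-adjointness $A^{*}=-A$, positivity of $AA^{*}=-A^{2}$, commutation of $P=\sqrt{AA^{*}}$ with $A$ via functional calculus, and the resulting identities $J^{2}=-\op{Id}$ and $AJ=-P$) is the complete version of that cited fact. No gaps.
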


\begin{corollary} \label{coralm}
Let $J \in {\mathcal{J}(M,\omega)}_{\reg}$ such that  the form ${\widetilde{\omega}}_{\mathcal{S}_{\op{i}}(\Sigma)} $  on ${\widetilde{\mathcal{M}}}_{\op{i}}(A,\,\Sigma,\, J)$  is symplectic. Let $g$ be the metric induced on the quotient space by
 the $\op{L}^2$\--norm $$
\sqrt{ {\widetilde{\omega}}_{\mathcal{S}_{\op{i}}(\Sigma)}  (\cdot,J \cdot)}.
$$ Then, by Proposition \ref{alm}, we obtain
a canonical  almost complex structure on 
${\widetilde{\mathcal{M}}}_{\op{i}}(A,\,\Sigma,\,J)$  that is compatible with the form.
\end{corollary}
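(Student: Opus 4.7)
The plan is to apply Proposition~\ref{alm} directly to the symplectic manifold $({\widetilde{\mathcal{M}}}_{\op{i}}(A,\,\Sigma,\,J),\,{\widetilde{\omega}}_{\mathcal{S}_{\op{i}}(\Sigma)})$ equipped with the Riemannian metric $g$. The real content of the proof is therefore to check that the pointwise pairing $[\tau_1],[\tau_2] \mapsto {\widetilde{\omega}}_{\mathcal{S}_{\op{i}}(\Sigma)}([\tau_1],\,\tilde{J}[\tau_2])$ descends to a well-defined, symmetric, positive-definite bilinear form on the quotient tangent bundle; once this is established, Proposition~\ref{alm} produces the claimed canonical almost complex structure.

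First I would observe that the almost complex structure $\tilde{J}$ of Definition~\ref{defj} descends to an endomorphism of $\op{T}{\widetilde{\mathcal{M}}}_{\op{i}}(A,\,\Sigma,\,J)$ squaring to $-\op{Id}$. Tangent vectors to the $\Aut(\Sigma,j)$-orbit through $f$ are precisely the sections everywhere tangent to $f(\Sigma)$, and Claim~\ref{jclaim}(1) shows that $\tilde{J}$ preserves this subspace; combined with Theorem~\ref{thclonon}, which identifies this subspace with the kernel of ${\omega}_{\mathcal{S}_{\op{i}}(\Sigma)}$ at $f$, one gets a well-defined $\tilde{J}$ on the quotient tangent space at each class $[f]$.

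Next I would verify the three properties of $g([\tau_1],[\tau_2]) := {\widetilde{\omega}}_{\mathcal{S}_{\op{i}}(\Sigma)}([\tau_1],\,\tilde{J}[\tau_2])$. Well-definedness on the quotient is a second application of Theorem~\ref{thclonon}. Symmetry follows from Proposition~\ref{integcomp}(2), using ${\tilde{J}}^2=-\op{Id}$ and antisymmetry of ${\widetilde{\omega}}_{\mathcal{S}_{\op{i}}(\Sigma)}$:
\[
{\widetilde{\omega}}_{\mathcal{S}_{\op{i}}(\Sigma)}([\tau_1],\,\tilde{J}[\tau_2]) = {\widetilde{\omega}}_{\mathcal{S}_{\op{i}}(\Sigma)}(\tilde{J}[\tau_1],\,{\tilde{J}}^2 [\tau_2]) = -{\widetilde{\omega}}_{\mathcal{S}_{\op{i}}(\Sigma)}(\tilde{J}[\tau_1],\,[\tau_2]) = {\widetilde{\omega}}_{\mathcal{S}_{\op{i}}(\Sigma)}([\tau_2],\,\tilde{J}[\tau_1]).
\]
Positive definiteness is exactly Proposition~\ref{integcomp}(1): a nonzero class in the quotient is represented by a section that is not everywhere tangent to $f(\Sigma)$, and for any such section one has ${\widetilde{\omega}}_{\mathcal{S}_{\op{i}}(\Sigma)}([\tau],\tilde{J}[\tau]) > 0$.

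With $g$ established as a Riemannian metric, Proposition~\ref{alm} applies verbatim and produces the canonical ${\widetilde{\omega}}_{\mathcal{S}_{\op{i}}(\Sigma)}$-compatible almost complex structure $\sqrt{AA^{*}}^{-1}A$, where $A$ is determined by ${\widetilde{\omega}}_{\mathcal{S}_{\op{i}}(\Sigma)}(u,v) = g(Au,v)$. There is no serious analytic obstacle here; essentially all the work has already been done by Proposition~\ref{integcomp} and Theorem~\ref{thclonon}. The only point that takes even minor thought is verifying that $g$, originally described as an $\op{L}^2$-pairing built from integration over $\Sigma$, gives a genuine metric on the finite-dimensional quotient tangent space, and this is precisely what Proposition~\ref{integcomp}(1) guarantees; indeed, with this particular choice of $g$, the operator $A$ is nothing but $\tilde{J}$, so the almost complex structure furnished by Proposition~\ref{alm} coincides with the descended $\tilde{J}$ itself.
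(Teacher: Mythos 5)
Your overall strategy---verify that the $\op{L}^2$ pairing is a genuine Riemannian metric on the quotient and then feed it to Proposition~\ref{alm}---is exactly what the paper intends (the paper's ``proof'' is essentially the single clause ``by Proposition~\ref{alm}''), and your verification of well-definedness, symmetry and positivity via Theorem~\ref{thclonon}, Claim~\ref{jclaim}(1) and Proposition~\ref{integcomp} is sound, because all of those computations take place in the ambient space $\op{T}_f\mathcal{S}_{\op{i}}(\Sigma)$, where $\omega_{\mathcal{S}_{\op{i}}(\Sigma)}$ and $\tilde{J}$ are both defined.

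However, two of your assertions are wrong in the generality the corollary requires. You claim that $\tilde{J}$ ``descends to an endomorphism of $\op{T}\widetilde{\mathcal{M}}_{\op{i}}(A,\,\Sigma,\,J)$,'' and you conclude that the operator $A$ of Proposition~\ref{alm} is just $\tilde{J}$, so that the canonical structure is the descended $\tilde{J}$ itself. This requires $\tilde{J}$ to preserve $\op{T}_f\mathcal{M}_{\op{i}}(A,\,\Sigma,\,J)$, which by Lemma~\ref{jinteg} is guaranteed only when $J$ is \emph{integrable}; the paper explicitly warns (in the remark following Proposition~\ref{propintsym}) that for non-integrable $J$ the tangent space to the moduli space need not be closed under $\tilde{J}$. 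The corollary is aimed precisely at such $J$ (e.g.\ those produced by Lemma~\ref{open}), and for them your identification $A=\tilde{J}$ is ill-posed: $\tilde{J}$ of a moduli tangent vector need not be a moduli tangent vector, so $A$ is determined only by the restriction of $g$ and $\widetilde{\omega}_{\mathcal{S}_{\op{i}}(\Sigma)}$ to $\op{T}\widetilde{\mathcal{M}}_{\op{i}}(A,\,\Sigma,\,J)$ and is in general \emph{not} $\tilde{J}$. Indeed, if $\tilde{J}$ did descend compatibly there would be nothing to prove and no need for the polar-decomposition device of Proposition~\ref{alm}; the whole point of the corollary is to manufacture a compatible structure when $\tilde{J}$ is unavailable on the moduli space. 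The fix is simply to delete the descent claim and the final sentence: define $g([\tau_1],[\tau_2])$ by evaluating $\omega_{\mathcal{S}_{\op{i}}(\Sigma)}(\tau_1,\,\tilde{J}\tau_2)$ upstairs (which needs no invariance of $\op{T}_f\mathcal{M}_{\op{i}}$ under $\tilde{J}$), check the three properties as you did, and let Proposition~\ref{alm} produce $A$ and $\sqrt{AA^{*}}^{-1}A$ without attempting to identify them further.
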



\section{Examples} \label{examples}
In this section $\Sigma$ is $\C P^1(=S^2)$ with the standard complex structure.

\begin{example}
In case $(M,\omega)=(\C P^n, \omega_{FS})$ and $A=L$, the homology class of $\C P^1$ that generates $\op{H}_2(M,\,\Z)$, the class $A$ is indecomposable. By \cite[Proposition 7.4.3]{MS2}, the standard  complex structure on $\C P^n$ is regular (see Remark \ref{remregst}). Thus, by  Corollary \ref{vol}, for every $A$-regular almost complex structure $J \in \J$ there is a $J$-holomorphic sphere in $A$.

In general, if $M$ is compact, $\op{H}_2(M,\,\Z)$ is of dimension one, and $A$ is the generator of  $\op{H}_2(M,\,\Z)$ of minimal (symplectic) area, then $A$ is indecomposable.  If $(M,\,\omega_0,\,J_0)$ is a compact K\"ahler manifold whose automorphism group acts transitively, then $J_0$ is regular for every $C \in \op{H}_2(M,\,\Z)$  \cite[Proposition 7.4.3]{MS2}. Thus if $A$ is represented by a $J_0$-holomorphic sphere, then, by Corollary \ref{vol}, for every $p_A$-regular almost complex structure $J \in \J$ there is a $J$-holomorphic sphere in $A$. As an example of such a manifold, consider the Grassmannian $M=G/ P$ of oriented 2-planes in $\R^n$: $G=\SO(n)$, $P=S_1 \times \SO(n-2)$. We thank Yael Karshon for suggesting the Grassmannian example.

\end{example}

\begin{noTitle} \label{nt2}
Let $(M,\omega)$ be a symplectic four-manifold. By the adjunction inequality, in a four-dimensional manifold, 
if $A  \in \op{H}_2(M,\, \Z)$ is represented by a simple $J$-holomorphic sphere $f$, then
$$ A \cdot A - \op{c}_1(A) + 2 \geq 0,$$
with equality if and only if $f$ is an embedding; see~\cite[Cor.~E.1.7]{MS2}.
Thus, if  there is $J' \in \J$ such that $A=[u]$ for an embedded $J'$-holomorphic sphere $u \colon \C P^1 \to M$, then for every $(f, \, J) \in \M(A,\C P^1,\J)$, the sphere $f$ is an embedding; in particular,  $$\M(A,\C P^1,\J)=\mathcal{M}_{\op{i}}(A,\,\C P^1, \, \J).$$
 The existence of such  $J' \in \J(M,\omega)$ 
 is guaranteed when 
  $A$ is represented by an embedded symplectic sphere (see \cite[Section 2.6]{MS}).

The Hofer-Lizan-Sikorav regularity criterion asserts that in a four-dimensional manifold, if $f$ is an 
immersed $J$-holomorphic sphere, then $(f,\, J)$ is a regular point 
for the projection $p_{[f]}$ if and only if 
$c_{1}([f])\geq 1$,  \cite{HLS}.)

Therefore, if $A \in \op{H}_2(M,\, \Z)$ is such that $\op{c}_1(A) \geq 1$, and $A$ is represented by an embedded $J'$-holomorphic sphere for some almost complex structure $J'$ on a four-manifold $M$, then every $(f, \,J) \in  \M(A,\C P^1,\J)(=\mathcal{M}_{\op{i}}(A,\,\C P^1, \, \J))$ is a regular point for $p_A$, thus, by part (d) of Lemma \ref{consard}, the image of $p_A$ is open in $\J$.
\end{noTitle}

As a result of \ref{nt1} and \ref{nt2} we get the following lemma.
\begin{lemma} \label{lemc1}
Let $(M,\omega)$ be a compact symplectic four-manifold, and  $A \in \op{H}_2(M,\, \Z)$ with 
$\op{c}_1(A) \geq 1$. Assume that $S \subset \J(M,\omega)$ is such that:
\begin{itemize}
\item  $A$ is $J$-indecomposable for all $J \in S$;
\item $A$ is represented by an embedded $J'$-holomorphic sphere for some $J \in S$;
\item $S$ is connected. 
\end{itemize}
Then the map $p_A \colon {p_A}^{-1}(S) \to  S$ is  onto, and its image is open and closed, thus $S=p_A({p_A}^{-1}(S)) \subseteq  \mathcal{J}_{\scriptop{reg}}(A)$.
\end{lemma}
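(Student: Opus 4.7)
The plan is to show the image of $p_A \colon p_A^{-1}(S) \to S$ is nonempty, open, and closed in $S$, and then use connectedness of $S$. Once we know the image is all of $S$, the fact that $S \subseteq \mathcal{J}_{\scriptop{reg}}(A)$ will follow from showing that every preimage point is a regular point of $p_A$.

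First, I would combine the two facts collected in \S\ref{nt2} to control $p_A^{-1}(S)$. Since $A$ is represented by an embedded $J'$-holomorphic sphere for some $J' \in S$, the adjunction inequality forces every $(f,J) \in \M(A,\C P^1,\J)$ to be an embedding, so in particular $p_A^{-1}(S) \subseteq \mathcal{M}_{\op{i}}(A,\,\C P^1,\,\J)$. The hypothesis $\op{c}_1(A)\geq 1$ then lets me invoke the Hofer--Lizan--Sikorav criterion to conclude that every point of $p_A^{-1}(S)$ is a regular point of $p_A$. By part (d) of Lemma \ref{consard}, this implies that the image $p_A(p_A^{-1}(S))$ is open in $\J$, hence open in $S$.

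Next, I would use $J$-indecomposability to show the image is closed. Since $A$ is $J$-indecomposable for every $J \in S$, the discussion in \S\ref{nt1} (applying Gromov compactness to sequences in $p_A^{-1}(S)$) says that the induced map $p_A^{-1}(S)/\Aut(\Sigma,j) \to S$ is proper. In particular, its image (which coincides with the image of $p_A \colon p_A^{-1}(S) \to S$) is closed in $S$. Nonemptiness is immediate from the existence of the embedded $J'$-holomorphic sphere, which places $J' \in p_A(p_A^{-1}(S))$.

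Finally, the connectedness of $S$ forces $p_A(p_A^{-1}(S)) = S$, so $p_A$ is onto. Since we have already shown that every point of $p_A^{-1}(S)$ is a regular point of $p_A$, every $J \in S$ is automatically a regular value of $p_A$, yielding $S \subseteq \mathcal{J}_{\scriptop{reg}}(A)$ as claimed. The only subtle ingredient is really the first paragraph---the interplay between the adjunction inequality (to upgrade immersed to embedded and to ensure everything lies in $\mathcal{M}_{\op{i}}$) and the Hofer--Lizan--Sikorav criterion (to convert $\op{c}_1(A)\geq 1$ into pointwise regularity of $p_A$); openness and closedness are then direct applications of already-quoted results.
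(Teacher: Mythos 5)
Your proof is correct and follows essentially the same route as the paper, which derives the lemma directly from the observations in \S\ref{nt1} and \S\ref{nt2}: adjunction plus Hofer--Lizan--Sikorav plus Lemma \ref{consard}(d) for openness and pointwise regularity, properness from $J$-indecomposability for closedness, and connectedness of $S$ to conclude. The only nitpick is that the image $p_A(p_A^{-1}(S))$ equals $(\mathrm{image}\ p_A)\cap S$ and is therefore open \emph{in $S$} rather than in $\J$, which is what you actually use.
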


In particular if $S$ satisfies the assumptions of Lemma \ref{lemc1} and $J _* \in S$ is integrable, then Lemma \ref{open} applies to every $p_A$-transversal path $\{J_{\lambda}\}_{0 \leq \lambda \leq 1}$ in $S$, with $J_0=J_*$.  If 
$S \subset \J_{\reg}(A)$ is path-connected and open then by Lemma \ref{consard}, there is a $p_A$-transversal path in $S$ between every two elements of $S$. 
We give here three examples of $(M,\omega,A)$ and $S$, in all of which $S$ satisfies the assumptions of Lemma \ref{lemc1}, and $S$ is path-connected and open;
in addition, there is an integrable $J_* \in S$ such that ${\mathcal{M}_{\scriptop{i}}(A,\,\Sigma,\, J_*)} \neq \emptyset$.
Consequently, Corollary \ref{vol} implies the existence of a $J$-holomorphic sphere in $A$ for every $J$ in the outlined sets $S$; in all of the examples $S$ is dense in $\J$ thus we get the existence of a $J$-holomorphic sphere in $A$ for a generic $J$.

\begin{example}
In case $(M,\,\omega)=(\C P^2,\, \omega_{FS})$ and $A=L$, the homology class of $\C P^1$ that generates
$\op{H}_2(M,\,\Z)$, the assumptions of Lemma \ref{lemc1} are satisfied for the set $S=\J(M,\,\omega).$  

The same holds in case $(M,\,\omega)=(S^2 \times S^2, \, \tau \oplus \tau),$ where $\tau$ is the rotation 
invariant area form on $S^2$ (with total area equal to $1$), and $A=[S^2 \times \{\pt\}]$.

 In each of these cases, there is a standard integrable compatible complex structures in $S$, and $A$ is represented by a sphere that is holomorphic for the standard structure.
\end{example}

\begin{example}
Consider $(S^2 \times S^2,\, (1+\lambda) \tau \oplus \tau)$. (When $\lambda >0$, this symplectic manifold  has a compatible almost complex structure $J$ for which there is a non-regular sphere, namely the antidiagonal $\overline{D}=\{(s,-s) \in S^2 \times S^2\}$; see \cite[Example 3.3.6]{MS2}.) 
By  Abreu \cite[Sec. 1.2]{abreu}, for $0 <\lambda \leq 1$,
the subset $\J_{\lambda}^b$  of $(1+\lambda) \tau \oplus \tau$-compatible almost complex structures for which the class $[\overline{D}]$ is represented by a (unique embedded) J-holomorphic sphere is a non-empty, closed, codimension $2$ submanifold of $\J=\J(S^2 \times S^2,\, (1+\lambda) \tau \oplus \tau)$. Abreu also shows that for all $J$ in the complement of $\J_{\lambda}^b$, the class $A=[S^2 \times \{\pt\}]$ is $J$-indecomposable. Thus, the assumptions of Lemma \ref{lemc1} are satisfied for $S=\J \smallsetminus \J_{\lambda}^b$, and $S$  is also open dense and path-connected in $\J$.
Notice that the standard split compatible complex structure $j \oplus j$ is in $S$, and that the class  $A=[S^2 \times \{\pt\}]$ is represented by (a $2$-parameter family of) embedded spheres $S^2 \times \{s\}$ that are holomorphic for $j \oplus j$.
 Therefore, by Corollary \ref{vol}, there is a $J$-holomorphic sphere in $A$ for every $J$ in the complement of $\J_{\lambda}^b$.

Abreu \cite[Theorem 1.8]{abreu} shows that   $\J \smallsetminus \J_{\lambda}^b$ equals the space $\J_{\lambda}^g$ of $J$ for which the homology class $[S^2 \times \{\pt\}]$  is represented by an embedded $J$-holomorphic sphere.
\end{example}

\begin{example}
Let $(M,\,\omega)$ be a compact symplectic four-manifold and $A \in \op{H}_2(M,\, \Z)$ a homology class that can be represented by an embedded symplectic sphere and  such that $\op{c}_1(A)=1$. Consider the set of  
$J \in \J(M,\,\omega)$ for which there is a non-constant $J$-holomorphic sphere in a homology class $H \in \op{H}_2(M,\, \Z)$ with $\op{c}_1(H)<1$ and $\omega(H)<\omega(A)$, and denote its complement by $U_A$. Then, by definition, $A$ is $J$-indecomposable for every $J$ in $S=U_A$.    
The set $S=U_A$ is an open dense and path-connected subset of $\J(M,\,\omega)$, see, e.g., \cite[App. A]{KKP}. 

In case  $(M,\,\omega)$ is obtained by a sequence of blow ups from $(\C P^2,\,\omega_{FS})$ or from 
$(S^2 \times S^2, \,\tau \oplus \tau)$ and $A$ is the homology class of one of the blow ups, there is an
 integrable structure $J_*$ in $S=U_A$, and $A$ is represented by a $J_*$-holomorphic sphere. Therefore, by Corollary \ref{vol},  there is a $J$-holomorphic sphere in $A$ for every $J$ in $U_A$.   
In \cite[App. A]{KKP},  it is shown that for every $J \in U_A$, the class $A$ is represented by an embedded $J$-holomorphic sphere.
\end{example}

{\bf Acknowledgements}. We are grateful 
to D. Auroux, H. Hofer, Y. Karshon, D. McDuff, and R. Melrose
for helpful
discussions. J. Coffey and L. Kessler were affiliated with the Courant
Institute when this
project started. 
The project was partially funded by NSF postdoctoral grants 
and a VIGRE grant DMS-9983190.

{\small 
}

\noindent

\bigskip\noindent
Liat Kessler
\\
Massachusetts Institute of Technology,
Mathematics Department\\
Cambridge MA 02139 USA\\
{\em E\--mail}: kessler@math.mit.edu\\

\smallskip\noindent
Joseph Coffey\\
{\em E\--mail}:joe.coffey@gmail.com\\

\smallskip\noindent
\'Alvaro Pelayo\\
University of California\---Berkeley,
Mathematics Department\\
970 Evans Hall $\#$ 3840\\
Berkeley, CA 94720-3840 USA\\
{\em E\--mail}: {apelayo@math.berkeley.edu}

\end{document}